\title{Disjoint Paths and Connected Subgraphs\\ for $H$-Free Graphs}
\author{Walter Kern\inst{1}\thanks{Walter Kern recently passed away and we are grateful for his contribution.} \and Barnaby Martin\inst{2}\and Dani\"el Paulusma\inst{2}\thanks{Dani\"el Paulusma was supported by the Leverhulme Trust (RPG-2016- 258).}  \and\\ Siani Smith\inst{2} \and Erik Jan van Leeuwen\inst{3}}
\institute{Department of Applied Mathematics, University of Twente, The Netherlands,
\email{w.kern@twente.nl}
\and
Department of Computer Science,
 Durham University, Durham, UK,
\email{\{barnaby.d.martin,daniel.paulusma,siani.smith\}@durham.ac.uk}
\and
Department of Information and Computing Sciences, 
Utrecht University,\\
The Netherlands,
\email{e.j.vanleeuwen@uu.nl}
}
\newcommand{\ssi}{\subseteq_i}
\newcommand{\problemdef}[3]{
	\begin{center}
		\begin{boxedminipage}{1.02\textwidth}
			\textsc{{#1}}\\[1pt]  
			\begin{tabular}{ r p{0.8\textwidth}}
				\textit{~~~~Instance:} & {#2}\\
				\textit{Question:} & {#3}
			\end{tabular}
		\end{boxedminipage}
	\end{center}
}
\newcommand{\NP}{{\sf NP}}
\newcounter{ctrclaim}[theorem]
\newcounter{ctrcase}[theorem]
\begin{document}
\maketitle

\begin{abstract}
The well-known {\sc Disjoint Paths} problem is to decide if a graph contains $k$ pairwise disjoint paths, each connecting a different terminal pair from a set of $k$ distinct pairs.
We determine, with an exception of two cases, the complexity of the {\sc Disjoint Paths} problem for $H$-free graphs. 
If $k$ is fixed, we obtain the {\sc $k$-Disjoint Paths} problem, which is known to be polynomial-time solvable on the class of all graphs for every $k\geq 1$.
The latter does no longer hold if we need to connect vertices from terminal sets instead of 
terminal
pairs. We completely classify the complexity of {\sc $k$-Disjoint Connected Subgraphs} for $H$-free graphs, and give the same almost-complete classification for {\sc Disjoint Connected Subgraphs} for $H$-free graphs as for {\sc Disjoint Paths}.

%\keywords{}
\end{abstract}

\section{Introduction}

A path from $s$ to $t$ in a graph~$G$ is an {\it $s$-$t$-path} of $G$, and $s$ and $t$ are called its {\it terminals}.  Two pairs $(s_1,t_1)$ and $(s_2,t_2)$ are {\it disjoint} if $\{s_1,t_1\}\cap \{s_2,t_2\}=\emptyset$. In 1980, Shiloach~\cite{Sh80} gave a polynomial-time algorithm for testing if a graph with disjoint terminal pairs $(s_1,t_1)$ and $(s_2,t_2)$ has vertex-disjoint paths $P^1$ and $P^2$ such that each $P^i$ is an $s_i$-$t_i$ path. This problem can be generalized as follows.

\problemdef{Disjoint Paths}{a graph $G$ and pairwise~disjoint~terminal~pairs~$(s_1,t_1)\ldots,(s_k,t_k)$.}{Does $G$ have pairwise vertex-disjoint paths $P^1$,\ldots,$P^k$ such that $P^i$ is an $s_i$-$t_i$ path for $i\in \{1,\ldots,k\}$?}

\noindent
Karp~\cite{Ka75} proved that {\sc Disjoint Paths} is \NP-complete. If $k$ is fixed, that is, not part of the input, then we denote the problem as {\sc $k$-Disjoint Paths}. For every $k\geq 1$, Robertson and Seymour proved the following celebrated result.
\begin{theorem}[\cite{RS95}]\label{t-known}
For all $k\geq 2$, {\sc $k$-Disjoint Paths} is polynomial-time
solvable.
\end{theorem}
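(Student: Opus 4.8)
The plan is to follow the \emph{irrelevant vertex} paradigm: reduce the input to an equivalent instance of bounded treewidth, and then solve that by dynamic programming. Fix $k$, let $G$ be the input graph, and let $\mathcal{P}=\{(s_1,t_1),\ldots,(s_k,t_k)\}$ be the terminal pairs. The easy ingredient is that on graphs of treewidth at most $w$ the problem can be decided in time $f(w,k)\cdot n$ by a dynamic programme over a tree decomposition: for each bag one records the (at most $g(w,k)$ many) patterns in which the at most $w+1$ bag-vertices can be occupied by the $k$ routes, and combines these bottom-up. Equivalently, the property ``$G$ has pairwise disjoint $s_i$-$t_i$ paths'' is expressible in monadic second-order logic with the $2k$ terminals as constants, so Courcelle's theorem applies. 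It therefore suffices to establish the following: there is a function $w(k)$ such that, whenever $\mathrm{tw}(G)\ge w(k)$, one can in polynomial time find a vertex $v\in V(G)$ that is \emph{irrelevant}, meaning that $(G,\mathcal{P})$ and $(G-v,\mathcal{P})$ are equivalent instances. Given this, we delete irrelevant vertices one at a time; after at most $|V(G)|$ deletions the treewidth has dropped below $w(k)$ and we finish by dynamic programming. The whole algorithm is driven by induction on $|V(G)|$.

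To locate an irrelevant vertex when $\mathrm{tw}(G)$ is large, I would first invoke the Excluded Grid Theorem: sufficiently large treewidth forces a large wall (a subdivided grid) as a subgraph, and by passing to a large subwall we may assume it avoids the $2k$ terminals. The wall alone does not suffice; we need it to be \emph{flat}, meaning that after deleting a bounded set $A$ of apex vertices the subwall lies inside a piece of $G-A$ that embeds in a disc with the rest of the graph attached only near the disc boundary. The Flat Wall Theorem supplies exactly this, up to the alternative that $G$ instead contains a huge clique minor, which must be disposed of by a separate argument (one shows that such a clique minor either routes all $k$ pairs directly or can absorb the reroutings below). In the flat case, the crucial claim is that the vertex $v$ at the centre of a sufficiently deep flat subwall is irrelevant. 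The idea is topological: any hypothetical solution that uses $v$ crosses the disc bounded by the subwall only boundedly often (at most $2k$ segments), and the dense system of concentric cycles of the wall lets us reroute each such segment along wall cycles, homotopically pushing all routes off $v$ without creating crossings and without changing how the routes meet $A$.

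The main obstacle is precisely this irrelevant-vertex step; it is the technical heart of the whole argument. One must (i) upgrade ``large grid minor'' to ``large \emph{flat} wall plus a bounded apex set (or else a huge clique minor)'', which is the Flat Wall Theorem and is itself substantial; (ii) carry out the homotopy and rerouting argument inside the disc rigorously, controlling the interaction of the $k$ routes with the apex set $A$ and with the bounded number of boundary attachments, and pinning down the quantitative relationship between the required wall height and $k$; and (iii) make every step constructive, so that each deletion costs only $n^{O(1)}$ time and the overall algorithm runs in polynomial time (the original running time being $O(n^3)$, later improved to $O(n^2)$). The remaining components — extracting the wall, the bounded-treewidth dynamic programme, and the induction that iterates the deletions — are comparatively routine.
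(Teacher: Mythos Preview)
The paper does not prove this theorem at all: it is quoted as a black-box result of Robertson and Seymour~\cite{RS95}, with only the remark that the running time is cubic (later improved to quadratic in~\cite{KKR12}). There is therefore no proof in the paper to compare your attempt against.

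For what it is worth, your outline is an accurate high-level description of the actual Robertson--Seymour strategy (irrelevant vertex via the Flat Wall Theorem, with the bounded-treewidth base case handled by dynamic programming), and you correctly identify the hard steps. But since the paper merely cites the result, the appropriate ``proof'' here is simply the citation, not a reconstruction of the Graph Minors machinery.
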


\noindent
The running time in Theorem~\ref{t-known} is cubic. This was later improved to quadratic time by Kawarabayashi, Kobayashi and Reed~\cite{KKR12}.

As {\sc Disjoint Paths} is \NP-complete, it is natural to consider special graph classes. The {\sc Disjoint Paths} problem is known to be \NP-complete even for graph of clique-width at most~$6$~\cite{GW06}, split graphs~\cite{HHLS15}, interval graphs~\cite{NS96} and line graphs. The latter result can be obtained by a straightforward reduction (see, for example,~\cite{GW06,HHLS15}) from its edge variant, {\sc Edge Disjoint Paths}, proven to be \NP-complete by Even, Itai and Shamir~\cite{EIS76}. On the positive side, {\sc Disjoint Paths} is polynomial-time solvable for cographs, or equivalently, $P_4$-free graphs~\cite{GW06}. 

We can generalize the {\sc Disjoint Paths} problem by considering terminal sets~$Z_i$ instead of terminal pairs $(s_i,t_i)$. We write $G[S]$ for the subgraph of a graph $G=(V,E)$ induced by $S\subseteq V$, where $S$ is {\it connected} if $G[S]$ is connected.

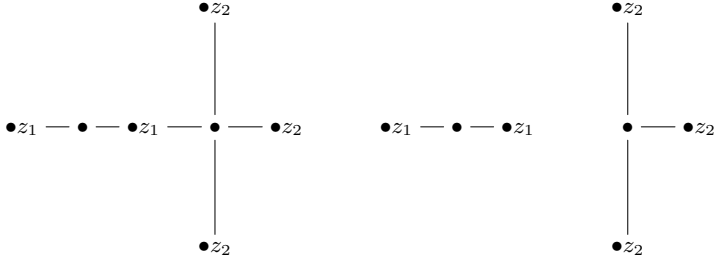
\begin{figure}[t]
		\resizebox{5.5cm}{!} {
			\begin{minipage}{0.45\textwidth}
				\centering
				\[
				\xymatrix@R3pc @C0.75pc{ 
				&&&\bullet{z_2} \ar@{-}[d]&&&&&&& \bullet{z_2} \ar@{-}[d]\\
				\bullet{z_1} \ar@{-}[r] & \bullet \ar@{-}[r] & \bullet{z_1} \ar@{-}[r] & \bullet \ar@{-}[r] & \bullet{z_2} && \bullet{z_1} \ar@{-}[r] & \bullet \ar@{-}[r] & \bullet{z_1} && \bullet \ar@{-}[r] & \bullet{z_2}  \\ 
				&&& \bullet{z_2} \ar@{-}[u] &&&&&&& \bullet{z_2} \ar@{-}[u]
				}
				\]
			\end{minipage}
		
		}
		\caption{An example of a yes-instance $(G,Z_1,Z_2)$ of {\sc ($2$-)Disjoint Connected Subgraphs} (left) together with a solution (right).}\label{f-dcs}
\end{figure}

\problemdef{Disjoint Connected Subgraphs}{a graph $G$ and pairwise~disjoint~terminal~sets~$Z_1,\ldots,Z_k$.}{Does $G$ have pairwise disjoint connected sets $S_1,\ldots, S_k$ such that $Z_i\subseteq S_i$ for $i\in \{1,\ldots,k\}$?}

\noindent
If $k$ is fixed, then we write $k$-{\sc Disjoint Connected Subgraphs}. 
We refer to Figure~\ref{f-dcs} for a simple example of an instance $(G,Z_1,Z_2)$ of $2$-{\sc Disjoint Connected Subgraphs}.
Robertson and Seymour~\cite{RS95} proved in fact that $k$-{\sc Disjoint Connected Subgraphs} is cubic-time solvable as long as $|Z_1|+\ldots +|Z_k|$ is fixed (this result implies Theorem~\ref{t-known}). 
Otherwise, van 't Hof et al.~\cite{HPW09} proved that  already $2$-{\sc Disjoint Connected Subgraphs} is \NP-complete even if $|Z_1|=2$ (and $|Z_2|$ may have arbitrarily large size). The same authors also proved that $2$-{\sc Disjoint Connected Subgraphs} is \NP-complete for split graphs. 
Afterwards, Gray et al.~\cite{GKLS12} proved that {\sc $2$-Disjoint Connected Subgraphs} is \NP-complete for planar graphs.
Hence, Theorem~\ref{t-known} cannot be extended to hold for {\sc $k$-Disjoint Connected Subgraphs}.

We note that in recent years a number of exact algorithms were designed for {\sc $k$-Disjoint Connected Subgraphs}. 
Cygan et al.~\cite{CPPW14} gave an $O^*(1.933^n)$-time algorithm for the case $k=2$ (see~\cite{PR11,HPW09} for faster exact algorithms for special graph classes).
Telle and Villanger~\cite{TV13} improved this to time $O^*(1.7804^n)$.
Recently, Agrawal et al.~\cite{AFLST20} gave an  $O^*(1.88^n)$-time algorithm for the case $k=3$.  
Moreover, the {\sc $2$-Disjoint Connected Subgraphs} problem plays a crucial role in graph contractibility: a connected graph can be contracted to the $4$-vertex path if and only if there exist two vertices $u$ and $v$ such that $(G-\{u,v\},N(u),N(v))$ is a yes-instance of {\sc $2$-Disjoint Connected Subgraphs} (see, e.g.~\cite{KP20,HPW09}).

A class of graphs that is closed under vertex deletion is called {\it hereditary}. Such a graph class can be characterized by a unique set~${\cal F}$ of minimal forbidden induced subgraphs. Hereditary graphs enable a systematic study of the complexity of a graph problem under input restrictions: by starting with the case where $|{\cal F}|=1$, we may already obtain more general methodology and a better understanding of the complexity of the problem. 
If $|{\cal F}| = 1$, say ${\cal F} = \{H\}$ for some graph $H$, then we obtain the class of {\it $H$-free} graphs, that is, the class of graphs that do not contain $H$ as an induced subgraph (so, an $H$-free graph cannot be modified to $H$ by vertex deletions only).
In this paper, we start such a systematic study for {\sc Disjoint Paths} and {\sc Disjoint Connected Subgraphs}, both for the case when $k$ is part of the input and when $k$ is fixed.

\subsection*{Our Results}

By combining some of the aforementioned known results with a number of new results, we prove the following two theorems in Sections~\ref{s-main1} and~\ref{s-main2}, respectively. In particular, we generalize the polynomial-time result for {\sc Disjoint Paths} on $P_4$-free graphs to hold even for {\sc Disjoint Connected Subgraphs}. See Figure~\ref{f-3p1p4} for an example of a graph $H=sP_1+P_4$; we refer to Section~\ref{s-pre} for undefined terminology.

\begin{theorem}\label{t-main1}
Let $H$ be a graph. If $H\ssi sP_1+P_4$, then for every  $k\geq 2$, {\sc $k$-Disjoint Connected Subgraphs} on $H$-free graphs is polynomial-time solvable; 
otherwise even {\sc $2$-Disjoint Connected Subgraphs} is \NP-complete.
\end{theorem}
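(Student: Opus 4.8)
The plan is to split into the tractable side and the hardness side. For the positive direction, I would show that if $H \subseteq_i sP_1 + P_4$ for some $s \geq 0$, then $H$-free graphs have bounded structure that we can exploit. The key observation is that $sP_1 + P_4$-free graphs are ``almost'' $P_4$-free: deleting a bounded number of vertices (an independent set of size at most $s$) from an $sP_1+P_4$-free graph that also avoids some independent set leaves a $P_4$-free graph (a cograph). More precisely, I would argue that an $sP_1+P_4$-free graph either has independence number bounded by a constant depending only on $s$, or — after removing a bounded-size set — becomes a cograph; in a cograph we can use the known polynomial-time algorithm for \textsc{Disjoint Paths} on $P_4$-free graphs from~\cite{GW06}, and more generally the cotree decomposition to solve \textsc{$k$-Disjoint Connected Subgraphs}. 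Since $k$ is fixed, we may also guess, for each $S_i$, the bounded number of ``exceptional'' vertices it uses, reducing to the cograph case. The connectivity requirement (rather than path requirement) is handled by a dynamic programming / enumeration argument over the cotree: for a join node, any partition of the terminal sets into connected pieces can be merged, and for a union node the terminal sets must be distributed among components. I expect the main obstacle on this side to be handling the interaction between the (up to $s$) exceptional vertices and the cograph part cleanly — in particular ensuring that when several of the sets $S_i$ share the ``neighbourhood structure'' of these exceptional vertices, we can still decide feasibility in polynomial time; this likely needs a careful case analysis on how many exceptional vertices each $S_i$ claims and a matching-type argument to connect the cograph components.

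For the hardness direction, I would show that whenever $H \not\subseteq_i sP_1 + P_4$ for every $s$, the problem \textsc{$2$-Disjoint Connected Subgraphs} is \NP-complete on $H$-free graphs. The standard approach is to use the known \NP-completeness results: \textsc{$2$-Disjoint Connected Subgraphs} is \NP-complete for split graphs~\cite{HPW09} and for planar graphs~\cite{GKLS12}, and also (via the reduction from \textsc{Edge Disjoint Paths}) for line graphs. A graph $H$ with $H \not\subseteq_i sP_1 + P_4$ for all $s$ must either contain a cycle, contain a vertex of degree at least $3$ in a way that forbids the $P_4$-plus-isolated-vertices structure, or contain a sufficiently long induced path ($P_5$ or longer) or two disjoint nontrivial components beyond the allowed pattern. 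I would then observe that if $H$ has a cycle, then $H$-free graphs include all forests, hence all split graphs that are forests — but actually one needs $H$-free to include a class already known to be hard; the cleaner route is: if $H$ is not a linear forest (contains a cycle or a vertex of degree $\geq 3$), then either all line graphs of subdivided graphs are $H$-free (giving hardness via line graphs) or all split graphs are $H$-free (if $H$ has a cycle, since split graphs are $\{C_4,C_5,2P_2\}$-free... ) — this needs the usual careful bookkeeping. If $H$ is a linear forest but $H \not\subseteq_i sP_1+P_4$, then $H$ contains an induced $P_5$ or an induced $2P_2$ together with enough extra vertices, and one shows $H$-free graphs still contain a hard subclass (e.g.\ $P_5$-free bipartite or $2P_2$-free graphs), for which one gives a direct \NP-hardness reduction.

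The real work, and the step I expect to be the genuine obstacle, is the last one: proving \NP-completeness of \textsc{$2$-Disjoint Connected Subgraphs} for $H$-free graphs when $H$ is a linear forest strictly larger than any $sP_1 + P_4$ — concretely the cases $H = P_5$ (and its induced-subgraph supergraphs among linear forests) and $H = P_1 + 2P_2$ / $H = 3P_2$ type obstructions. For these I would design a reduction from a suitable \NP-complete problem (plausibly a variant of \textsc{Satisfiability} or \textsc{Hypergraph $2$-Colouring}, or directly from the split-graph hardness by ``padding'' the clique into a structure that destroys the forbidden path) ensuring the constructed graph is $H$-free. The difficulty is that making the instance $P_5$-free forces the graph to be dense in a controlled way, so the gadgets must have small diameter and large cliques, which conflicts with the usual long-path gadgets; I anticipate needing gadgets built from complete multipartite-like pieces glued along a clique, with the two terminal sets $Z_1, Z_2$ encoding the combinatorial choice. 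Putting these pieces together, together with the routine verification that each listed known-hard class is indeed $H$-free for the relevant $H$, completes the dichotomy.
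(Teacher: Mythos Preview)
Your proposal has genuine gaps on both sides of the dichotomy.

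On the positive side, the paper does \emph{not} use a global structural decomposition of $sP_1+P_4$-free graphs into a cograph plus a bounded set; that claim is unsubstantiated and is not how the argument works. Instead, the paper argues locally about each piece of a (minimal) solution: after contracting so that every $Z_i$ is independent (Lemma~\ref{l-contract}), one observes that each connected $G[S_i]$ is either $P_4$-free, in which case by Lemma~\ref{l-p4} a single vertex dominates $Z_i$ and $S_i = Z_i \cup \{u\}$, or contains an induced $rP_1+P_4$ with $r$ maximal and $r \le s-1$, whose vertex set then dominates $G[S_i]$ and can be extended to a \emph{connected} dominating set $W'$ of size at most $3s+1$; by minimality $S_i = Z_i \cup W'$. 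So for fixed $k$ one simply enumerates the $O(n^{(3s+1)k})$ choices of these small dominating sets. Your cotree/matching scheme is unnecessary and, as stated, not correct.

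On the hardness side, you are overcomplicating the linear-forest case and missing the one-line observation that drives it. If $H$ is a linear forest that is not an induced subgraph of any $sP_1+P_4$, then $H$ contains an induced $2P_2$: either some component is $P_5$ or longer (and $P_5$ already contains $2P_2$), or $H$ has at least two components containing an edge. Since split graphs are $(C_4,C_5,2P_2)$-free, they are $H$-free, and the known \NP-completeness of {\sc $2$-Disjoint Connected Subgraphs} on split graphs (Lemma~\ref{l-split}, from~\cite{HPW09}) finishes this case --- there is no need to design new $P_5$-free reductions. For $H$ containing a cycle $C_s$, split graphs do not help (they are only $C_4$- and $C_5$-free); the paper instead uses graphs of girth at least $s+1$ (Lemma~\ref{l-bipartite}), which are trivially $H$-free. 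Finally, if $H$ is a forest with a vertex of degree at least~$3$, one needs hardness on line graphs; note that this is hardness of {\sc $2$-Disjoint Connected Subgraphs} (Lemma~\ref{l-line}), which the paper proves directly and which does not follow immediately from the {\sc Disjoint Paths} hardness you cite.
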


\begin{figure}[t]
		\resizebox{6.5cm}{!} {
			\begin{minipage}{0.45\textwidth}
				
				\[
				\xymatrix@R3pc @C0.75pc{ 
					\bullet & \bullet & \bullet & \bullet \ar@{-}[r] & \bullet \ar@{-}[r] & \bullet \ar@{-}[r] & \bullet \\
				}
				\]
			\end{minipage}
		
		}
		\caption{The graph $H=3P_1+P_4$.}\label{f-3p1p4}
\end{figure}
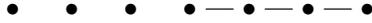

\begin{theorem}\label{t-main2}
Let $H$ be a graph not in $\{3P_1,2P_1+P_2,P_1+P_3\}$. If $H\ssi P_4$, then {\sc Disjoint Connected Subgraphs} 
is polynomial-time solvable for $H$-free graphs; otherwise even {\sc Disjoint Paths} 
is \NP-complete.
\end{theorem}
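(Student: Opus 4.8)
The plan is to prove the two directions of the dichotomy separately. For the polynomial direction, observe that if $H\ssi P_4$ then every $H$-free graph is $P_4$-free, hence a cograph, so it is enough to show that {\sc Disjoint Connected Subgraphs} is polynomial-time solvable on cographs. I would do this by recursion on the cotree. A cograph $G$ with at least two vertices is either a disjoint union $G_1+G_2$ or a join $G_1\oplus G_2$ of two smaller cographs. At a union node every connected set, and in particular every terminal set $Z_i$, must lie wholly inside $G_1$ or wholly inside $G_2$; if some $Z_i$ straddles both the answer is no, and otherwise the instance decomposes, so we distribute the $Z_i$ over the two sides and recurse on $G_1$ and $G_2$ independently. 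At a join node, first satisfy every $Z_i$ meeting both sides by setting $S_i:=Z_i$, which is already connected since a join of two non-empty graphs is connected; after deleting these sets and their vertices we are left with a cograph in which all surviving terminal sets lie on a single side, and each such set can be made connected by attaching to it one private vertex of the opposite side, the cross-edges of the join providing connectivity. The remaining work is a (routine) case analysis of the boundary situations in which one side runs short of free vertices.

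For the hardness direction, I would classify the graphs $H$ falling under ``otherwise''. If $H$ is not a linear forest, then it contains a cycle or a vertex of degree at least $3$. If it contains a cycle, reduce {\sc Disjoint Paths} to itself by subdividing each edge a fixed number of times depending only on $|V(H)|$: this does not change the answer, and a graph of girth larger than $|V(H)|$ contains no induced copy of any graph on at most $|V(H)|$ vertices that has a cycle, so the resulting instances are $H$-free. If $H$ is a forest with a vertex of degree at least $3$, then $K_{1,3}\ssi H$; since line graphs are claw-free they are $H$-free, and {\sc Disjoint Paths} is already known to be \NP-complete on line graphs. If $H$ is a linear forest with $2P_2\ssi H$ --- equivalently $H$ has two non-trivial components or a path component on at least five vertices --- then every split graph, being $2P_2$-free, is $H$-free, and {\sc Disjoint Paths} is already known to be \NP-complete on split graphs.

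What remains are exactly the linear forests $H=sP_1+P_t$ with $t\le 4$ that are neither an induced subgraph of $P_4$ nor one of $3P_1$, $2P_1+P_2$, $P_1+P_3$; explicitly, $sP_1$ with $s\ge 4$, $sP_1+P_2$ with $s\ge 3$, $sP_1+P_3$ with $s\ge 2$, and $sP_1+P_4$ with $s\ge 1$. One checks that $4P_1\ssi H$ for each such $H$ other than $P_1+P_4$, and that $P_1+P_4\ssi sP_1+P_4$ for every $s\ge 1$; hence the class of $4P_1$-free graphs is contained in the class of $H$-free graphs for every remaining $H\neq P_1+P_4$, and the class of $(P_1+P_4)$-free graphs is contained in the class of $(sP_1+P_4)$-free graphs. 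So it suffices to establish two new hardness results: that {\sc Disjoint Paths} is \NP-complete on $4P_1$-free graphs, and that it is \NP-complete on $(P_1+P_4)$-free graphs.

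I expect these last two reductions to be the main obstacle. None of the usual \NP-complete restrictions of {\sc Disjoint Paths} --- to split, interval, line, or arbitrarily high-girth graphs --- lies inside the class of $4P_1$-free graphs or inside the class of $(P_1+P_4)$-free graphs, so genuinely new gadgets are needed; moreover the construction for $4P_1$-free graphs must output graphs of independence number exactly $3$, and the one for $(P_1+P_4)$-free graphs must output graphs that are not $3P_1$-free, since otherwise one would also decide the case $H=3P_1$, which is deliberately left open. I would aim to reduce from a conveniently structured \NP-complete problem (for instance {\sc Disjoint Paths} on a sparse host, or a Hamiltonian-path-type problem), arranging the construction so that almost all vertices form one clique and only a bounded ``interface'' can occur in any independent set or any induced $P_1+P_4$; simultaneously verifying the forbidden-induced-subgraph property and the correctness of the path routing is the delicate point. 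Finally, both {\sc Disjoint Paths} and {\sc Disjoint Connected Subgraphs} are clearly in \NP, and since {\sc Disjoint Paths} is precisely the restriction of {\sc Disjoint Connected Subgraphs} to terminal sets of size two, every hardness result above carries over to {\sc Disjoint Connected Subgraphs}; together with the polynomial direction this yields the stated classification.
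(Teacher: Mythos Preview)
Your hardness-direction case analysis is sound and in fact tidier than the paper's own. The two reductions you single out as ``the main obstacle'' are, however, much easier than you anticipate: the paper obtains both at once from the split-graph reduction you already invoke. In the instances of Heggernes et al.\ the independent side $I$ is exactly $\{s_1,t_1,\ldots,s_k,t_k\}$; add every edge between terminals of \emph{different} pairs. These edges can never lie on a solution path, so the instance is unchanged, while $I$ now induces a clique minus the perfect matching $\{s_it_i:1\le i\le k\}$. The resulting graph is simultaneously $4P_1$-free and $(P_1+P_4)$-free, and this one construction covers all your remaining linear forests. No new gadgets are needed.

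Your polynomial direction has a genuine gap: the ``routine case analysis'' at a join is not routine. After disposing of the cross-side sets at $G_1\oplus G_2$, several one-sided $Z_i\subseteq G_1$ may compete for too few free vertices in $G_2$, so some of them must be served inside $G_1$; but \emph{which} ones matters, and the choices interact globally. Concretely, let $G_1$ be a $P_3$ on $a,x,b$ together with two isolated vertices $c,d$, let $G_2=\{e\}$, and let $Z_1=\{a,b\}$, $Z_2=\{c,d\}$: only $Z_1$ can be served inside $G_1$, so $e$ must go to $Z_2$, and the symmetric allocation fails. In general this allocation is a bipartite matching problem, not a finite case split, and recursing over subsets is exponential. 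The paper bypasses the cotree entirely: after contracting each $Z_i$ to an independent set, every connected $P_4$-free set containing $Z_i$ has a spanning complete bipartite subgraph with $Z_i$ on one side, so one may take $S_i=Z_i\cup\{y_i\}$ for a single non-terminal $y_i$ adjacent to all of $Z_i$. Feasibility then reduces to a size-$k$ matching between $\{Z_1,\ldots,Z_k\}$ and the non-terminals, computable by Hopcroft--Karp. Your recursion could be completed only by importing this matching step at join nodes, at which point it is redundant.
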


\noindent
Theorem~\ref{t-main1} completely classifies, for every $k\geq 2$, the complexity of {\sc $k$-Disjoint Connected Subgraphs} on $H$-free graphs. Theorem~\ref{t-main2} determines the complexity of {\sc Disjoint Paths} and {\sc Disjoint Connected Subgraphs} on $H$-free graphs for every graph $H$ except if $H\in \{3P_1,2P_1+P_2,P_1+P_3\}$. In Section~\ref{s-reduce} we reduce the number of open cases from six to {\it three} by showing some equivalencies. 

In Section~\ref{s-exact} we complement the above results by giving exact algorithms for both problems based on Held-Karp type dynamic programming techniques~\cite{HeldKarp,Bellman}.
In Section~\ref{s-con} we give some directions for future work. In particular we prove that both problems are polynomial-time solvable for co-bipartite graphs, which form a subclass of the class of $3P_1$-free graphs.

\section{Preliminaries}\label{s-pre}

We use $H \ssi H'$ to indicate that $H$ is an induced subgraph of $H'$, that is, $H$ can be obtained from $H'$ by a sequence of vertex deletions.
For two graphs $G_1$ and $G_2$ we write $G_1+G_2$ for the {\it disjoint union} $(V(G_1)\cup V(G_2),E(G_1)\cup E(G_2))$. We denote the disjoint union of $r$ copies of a graph $G$ by $rG$. A graph is said to be a linear forest if it is a disjoint union of paths.

We denote the path and cycle on $n$ vertices by $P_n$ and $C_n$, respectively.
The {\it girth} of a graph that is not a forest is the number of edges of a smallest induced cycle in it.

The {\it line graph} $L(G)$ of a graph $G$ has vertex set $E(G)$ and there exists an edge between two vertices $e$ and $f$ in $L(G)$ if and only if $e$ and $f$ have a common end-vertex in $G$. The claw $K_{1,3}$ is the 4-vertex star. It is readily seen that every line graph is claw-free.
Recall that a graph is $H$-free if it does not contain $H$ as induced subgraph. For a set of graphs $\{H_1,\ldots,H_r\}$, we say that a graph $G$ is {\it $(H_1,\ldots,H_r)$-free} if $G$ is $H_i$-free for every $i\in \{1,\ldots,r\}$.

A {\it clique} is a set of pairwise adjacent vertices and an {\it independent set} is a set of pairwise non-adjacent vertices. A graph is {\it split} if its vertex set can be partitioned into two (possibly empty) sets, one of which is a clique and the other is an independent set. A graph is split if and only if it is $(C_4,C_5,P_4)$-free~\cite{FH77}. 
A graph is a {\it cograph} if it can be defined recursively as follows: any single vertex is a cograph, the disjoint union of two cographs is a cograph, and the join of two cographs $G_1,G_2$ is a cograph (the {\it join} adds all edges between the vertices of $G_1$ and $G_2$).
A graph is a cograph if and only if it is $P_4$-free~\cite{CLB81}.

A graph $G=(V,E)$ is {\it multipartite}, or more specifically, {\it $r$-partite} if $V$ can be partitioned into $r$ (possibly empty) sets $V_1,\ldots, V_r$, such that there is an edge between two vertices $u$ and $v$ if and only if $u\in V_i$ and $v\in V_j$ for some $i,j$ with $i\neq j$. If $r=2$, we also say that $G$ is {\it bipartite}. 
If there exist an edge between every vertex of $V_i$ and every vertex of $V_j$ for every $i\neq j$, then the multipartite graph $G$ is {\it complete}. 

The {\it complement} of a graph $G=(V,E)$ is the graph $\overline{G}=(V,\{uv\; |\; u,v\in V, u\neq v\; \mbox{and}\; uv\notin E\})$.  The complement of a bipartite graph is a {\it cobipartite} graph. 
A set $W \subseteq V$ is a \emph{dominating set} of a graph $G$ if every vertex of $V\setminus W$ has a neighbour in $W$, or equivalently, $N[W]$ (the closed neighbourhood of $W$) is equal to $V$. We say that $W$ is a \emph{connected dominating set} if $W$ is a dominating set and $G[W]$ is connected.

\section{The Proof of Theorem~\ref{t-main1}}\label{s-main1}

We consider {\sc $k$-Disjoint Connected Subgraphs} for fixed $k$. First, we show a polynomial-time algorithm on $H$-free graphs when $H\ssi sP_1+P_4$ for some fixed $s \geq 0$. Then, we prove the hardness result.

For the algorithm, we need the following lemma for $P_4$-free graphs, or equivalently, cographs.
This lemma is well known and follows immediately from the definition of a 
cograph: in the construction of a connected cograph $G$, the last operation must be a join, so there exists cographs $G_1$ and $G_2$, such that $G$ obtained from adding an edge between every vertex of  $G_1$ and every vertex of $G_2$. Hence, the spanning complete bipartite graph of $G$ has non-empty partition classes $V(G_1)$ and $V(G_2)$. 

\begin{lemma}\label{l-p4}
Every connected $P_4$-free graph on at least two vertices has a spanning complete bipartite subgraph.
\end{lemma}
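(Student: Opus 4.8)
The plan is to exploit the recursive definition of a cograph recalled in Section~\ref{s-pre}, together with the characterization that a graph is $P_4$-free if and only if it is a cograph. First I would observe that a connected graph $G$ on at least two vertices is neither a single vertex nor the disjoint union of two nonempty cographs, since the latter is disconnected. Hence, tracing through the construction of $G$ as a cograph, the final operation cannot be ``take a single vertex'' (ruled out by $|V(G)|\ge 2$) nor ``disjoint union'' (ruled out by connectivity), so it must be a join: there exist cographs $G_1$ and $G_2$, each on at least one vertex, such that $G$ is obtained from $G_1$ and $G_2$ by adding all edges between $V(G_1)$ and $V(G_2)$.

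Then I would set $A = V(G_1)$ and $B = V(G_2)$ and note that $A$ and $B$ are nonempty, disjoint, and partition $V(G)$. By the definition of the join, every vertex of $A$ is adjacent in $G$ to every vertex of $B$, so the bipartite graph with parts $A$ and $B$ and edge set $\{ab : a\in A,\ b\in B\}$ is a spanning complete bipartite subgraph of $G$, which is exactly what is claimed.

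An alternative route, avoiding the constructive definition, is to use the well-known equivalent characterization that a graph is $P_4$-free if and only if every induced subgraph $F$ on at least two vertices is such that $F$ or $\overline{F}$ is disconnected. Applying this to $G$ itself: since $G$ is connected (and has at least two vertices), $\overline{G}$ must be disconnected, so $V(G)$ splits into nonempty parts $A$ and $B$ with no edge of $\overline{G}$ between them; equivalently, all $|A|\cdot|B|$ edges between $A$ and $B$ are present in $G$, again yielding the desired spanning complete bipartite subgraph.

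There is essentially no obstacle here; the statement is a direct consequence of standard structural facts about cographs. The only points requiring a little care are that both partition classes come out nonempty (guaranteed because the last operation is a join of two cographs each with at least one vertex, or because a disconnected complement has at least two components, each nonempty) and that the hypothesis $|V(G)|\ge 2$ is used precisely to exclude the single-vertex base case of the recursive definition.
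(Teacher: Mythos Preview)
Your proposal is correct and your primary argument matches the paper's own proof essentially verbatim: the paper also observes that, by the recursive definition of cographs, the last operation in constructing a connected $P_4$-free graph on at least two vertices must be a join of two cographs $G_1$ and $G_2$, yielding the spanning complete bipartite subgraph with parts $V(G_1)$ and $V(G_2)$. Your alternative via the disconnected complement is a pleasant addition but not in the paper.
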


\noindent
Two instances of a problem~$\Pi$ are {\it equivalent} when one of them is a yes-instance of $\Pi$ if and only if the other one is a yes-instance of $\Pi$. We note that if two adjacent vertices will always appear in the same set of every solution $(S_1,\ldots,S_k)$ for an instance $(G,Z_1,\ldots,Z_k)$, then we may contract the edge between them at the start of any algorithm. This takes linear time. Moreover, $H$-free graphs are readily seen (see e.g.~\cite{KP20}) to be closed under edge contraction if $H$ is a linear forest. Hence, we can make the following observation.

\begin{lemma}\label{l-contract}
For $k\geq 2$, from every instance of $(G,Z_1,\ldots,Z_k)$ of $k$-{\sc Disjoint Connected Subgraphs} we can obtain in polynomial time an equivalent instance $(G',Z_1',\ldots,Z_k')$ such that
every $Z_i'$ is an independent set. Moreover, if $G$ is $H$-free for some linear forest $H$, then $G'$ is also $H$-free.
\end{lemma}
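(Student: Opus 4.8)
The plan is to repeatedly apply the edge-contraction argument sketched just above the statement. Given an instance $(G, Z_1, \ldots, Z_k)$, suppose some $Z_i$ contains two adjacent vertices $u$ and $v$. I claim that in any solution $(S_1, \ldots, S_k)$, both $u$ and $v$ lie in $S_i$ (indeed $Z_i \subseteq S_i$ is required), so $u$ and $v$ always appear together in the same set of every solution. Hence contracting the edge $uv$ yields an equivalent instance: from a solution of the contracted instance we recover a solution of the original by splitting the contracted vertex back into $u$ and $v$ (both placed in $S_i$, which stays connected since $u,v$ were adjacent), and conversely a solution of the original instance projects to a solution of the contracted one since $S_i$ contained both endpoints.

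First I would formalize one contraction step: if $G'' = G/uv$ with $uv$ an edge inside $Z_i$, and $Z_i'' = (Z_i \setminus \{u,v\}) \cup \{w\}$ where $w$ is the new vertex, and $Z_j'' = Z_j$ for $j \neq i$ (note $w \notin Z_j''$ since the $Z_j$ are pairwise disjoint and $u, v \notin Z_j$), then $(G, Z_1, \ldots, Z_k)$ and $(G'', Z_1'', \ldots, Z_k'')$ are equivalent instances of $k$-{\sc Disjoint Connected Subgraphs}. Then I would iterate: as long as some $Z_i$ is not independent, perform such a contraction. Each contraction strictly decreases $|V(G)|$, so after at most $|V(G)|$ steps we reach an instance $(G', Z_1', \ldots, Z_k')$ in which every $Z_i'$ is an independent set; the whole process runs in polynomial (indeed linear) time. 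Finally, the preservation of $H$-freeness for a linear forest $H$ follows from the fact, cited in the excerpt (see~\cite{KP20}), that the class of $H$-free graphs is closed under edge contraction when $H$ is a linear forest: since $G' $ is obtained from $G$ by a sequence of edge contractions, $G'$ is $H$-free whenever $G$ is.

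I do not expect a serious obstacle here; the main point requiring a little care is the bookkeeping when a single $Z_i$ induces several edges or a connected subgraph on more than two vertices — one contraction may merge several original terminals into one new terminal vertex, and one must check that the new terminal sets remain pairwise disjoint and that the new vertex lands in exactly the right $Z_i'$. This is handled simply by observing that contraction only ever identifies two vertices that both belong to the same $Z_i$ (and to no other $Z_j$), so disjointness is maintained throughout. The equivalence in each direction is then routine: connectivity of $S_i$ is preserved under both contracting an edge of $S_i$ and under un-contracting (splitting a vertex into two adjacent vertices).
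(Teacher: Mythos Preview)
Your proposal is correct and follows exactly the approach the paper intends: the paper does not give a separate proof of this lemma but derives it as an observation from the paragraph immediately preceding it, and your write-up simply fleshes out that sketch (contract an edge inside some $Z_i$, argue equivalence, iterate, and invoke closure of $H$-free graphs under edge contraction for linear forests~$H$). There is nothing to add.
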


\noindent
We can now prove the following lemma.

\begin{lemma}\label{l-sp1p4}
Let $H$ be a graph. If $H\ssi sP_1+P_4$, then for every  $k\geq 1$, {\sc $k$-Disjoint Connected Subgraphs} on $H$-free graphs is polynomial-time solvable.
\end{lemma}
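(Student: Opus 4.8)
The plan is to solve {\sc $k$-Disjoint Connected Subgraphs} directly on $(sP_1+P_4)$-free graphs: if $H\ssi sP_1+P_4$ then every $H$-free graph is $(sP_1+P_4)$-free, so it suffices to handle the latter class in polynomial time for each fixed $k\geq 1$ and $s\geq 0$. The engine is a structural lemma that I would establish first: \emph{every connected $(sP_1+P_4)$-free graph $F$ has a connected dominating set of size at most $c(s)$, for some function $c$.}

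I would prove this by a short case analysis. If $F$ is $P_4$-free, then by Lemma~\ref{l-p4} it has a spanning complete bipartite subgraph with nonempty sides $A$ and $B$, and any edge between $A$ and $B$ is a connected dominating set of size~$2$. Otherwise $F$ contains an induced $P_4$ on vertices $p_1,p_2,p_3,p_4$. The set $W := V(F)\setminus N[\{p_1,p_2,p_3,p_4\}]$ has independence number at most $s-1$, since any independent $s$-set inside $W$ would together with $p_1p_2p_3p_4$ induce an $sP_1+P_4$. Hence a maximal independent set of $F[W]$ has at most $s-1$ vertices and dominates $W$, so adding $p_1,p_2,p_3,p_4$ to it gives a dominating set $D_0$ of $F$ with $|D_0|\leq s+3$. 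Finally, a shortest-path argument shows $F$ has diameter at most $2s+2$ (a longer geodesic would be an induced path long enough to exhibit an $sP_1+P_4$), so linking the at most $s+3$ components of $F[D_0]$ one pair at a time along shortest paths upgrades $D_0$ into a connected dominating set of size $O(s^2)$.

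With the lemma in hand, the algorithm is short. The key claim is that \emph{if $(G,Z_1,\dots,Z_k)$ is a yes-instance, then it has a solution $(S_1,\dots,S_k)$ with $\sum_{i=1}^k|S_i\setminus Z_i|\leq k\cdot c(s)$.} To see this, take any solution and replace each $S_i$ by $S_i':=D^{(i)}\cup Z_i$, where $D^{(i)}$ is a connected dominating set, of size at most $c(s)$, of the graph $G[S_i]$ --- which is connected (it is a solution set) and $(sP_1+P_4)$-free (it is an induced subgraph of $G$). Then $G[S_i']$ is connected, because $D^{(i)}$ is connected and dominates $S_i\supseteq Z_i$; and $S_i'\subseteq S_i$, so the sets stay pairwise disjoint. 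The algorithm then enumerates all sets $Y\subseteq V(G)\setminus(Z_1\cup\dots\cup Z_k)$ with $|Y|\leq k\cdot c(s)$ and all assignments of the vertices of $Y$ to the $k$ indices, and for each guess checks whether the sets $S_i:=Z_i\cup\{y\in Y: y\text{ is assigned to }i\}$ --- which are automatically pairwise disjoint --- are all connected. There are $n^{O_{k,s}(1)}$ guesses, each checkable in polynomial time, and correctness is exactly the key claim. Contracting the $Z_i$ to independent sets via Lemma~\ref{l-contract} is a harmless first step but is not actually needed here; an alternative route is to split on whether $\alpha(G)$ is bounded, invoking~\cite{RS95} when it is and the structural lemma when it is not.

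I expect the structural lemma to be the main obstacle, in particular the diameter estimate and the step that turns the bounded dominating set $D_0$ into a bounded \emph{connected} dominating set; the algorithmic part is then just a ``guess a bounded certificate'' argument, once one recognises that a connected dominating set of each solution set $G[S_i]$ is the right certificate.
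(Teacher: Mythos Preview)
Your proposal is correct and follows essentially the same approach as the paper: show that every connected $(sP_1+P_4)$-free graph has a connected dominating set of size bounded by a function of $s$, deduce that any solution can be shrunk so that each $S_i\setminus Z_i$ has bounded size, and then enumerate all such ``certificates''. The paper's proof of the structural bound is slightly slicker---it takes a maximal induced $rP_1+P_4$ in $G[S_i]$ (which dominates by maximality of $r$) and then invokes a folklore result to connect it, obtaining $c(s)=3s+1$ instead of your $O(s^2)$ via the diameter argument---but the architecture and the algorithm are identical.
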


\begin{proof}
Let $H\ssi sP_1+P_4$ for some $s\geq 0$.
Let $(G,Z_1,\ldots,Z_k)$ be an instance of {\sc $k$-Disjoint Connected Subgraphs}, where $G$ is an $H$-free graph.
By Lemma~\ref{l-contract}, we may assume without loss of generality that $G$ is connected and moreover that $Z_1,\ldots,Z_k$ are all independent sets.

We first analyze the structure of a solution $(S_1,\ldots,S_k)$ (if it exists). 
For $i\in \{1,\ldots,k\}$, we may assume that $S_i$ is inclusion-wise minimal, meaning there is no $S_i' \subset S_i$ that contains $Z_i$ and is connected.
Consider a graph $G[S_i]$. Either $G[S_i]$ is $P_4$-free or $G[S_i]$ contains an induced $rP_1+P_4$ for some $0\leq r\leq s-1$. 
We will now show that in both cases, $S_i$ is the (not necessarily disjoint) union of $Z_i$ and a connected dominating set of $G[S_i]$ of constant size.

First suppose that $G[S_i]$ is $P_4$-free. As $G[S_i]$ is connected and $Z_i$ is independent,
 we apply Lemma~\ref{l-p4} to find that $S_i\setminus Z_i$ contains a vertex~$u$ that is adjacent to every vertex of $Z_i$.
Hence, by minimality, $S_i = Z_i \cup \{u\}$ and $\{u\}$ is a connected dominating set of $G[S_i]$ of size~$1$.

Now suppose that $G[S_i]$ has an induced $rP_1+P_4$ for some $r\geq 0$, where we choose $r$ to be maximum. Note that $r\leq s-1$.
Let $W$ be the vertex set of the induced $rP_1+P_4$. Then, as $r$ is maximum, $W$ dominates $G[S_i]$. 
Note that $G[W]$ has $r+1 \leq s$ connected components.
Then, as $G[S_i]$ is connected and $W$ is a dominating set of $G[S_i]$ of size $r+4 \leq s+3$, it follows from folklore arguments
 (see e.g.~\cite[Prop.~6.3.24]{vanLeeuwen-thesis}) that $G[S_i]$ has a connected dominating set $W'$ of size at most $3s+1$. Moreover, by minimality, $S_i = Z_i \cup W'$.

Hence, in both cases we find that $S_i$ is the union of $Z_i$ and a connected dominating set of $G[S_i]$ of size at most $t = 3s+1$; note that $t$ is a constant, as $s$ is a constant.

Our algorithm now does as follows. We consider all options of choosing a connected dominating set of each $G[S_i]$, which from the above has size at most $t$. 
As soon as one of the guesses makes every $Z_i$ connected, we stop and return the solution.
The total number of options is $O(n^{tk})$, which is polynomial as $k$ and $t$ are fixed. Moreover, checking the connectivity condition can be done in polynomial time. Hence, the total running time of the algorithm is polynomial. \qed
\end{proof}

\noindent
The proof our next result is inspired by the aforementioned \NP-completeness result of~\cite{HPW09} for instances $(G,Z_1,Z_2)$ where $|Z_1|=2$ but $G$ is a general graph.

\begin{lemma}\label{l-line}
The  {\sc $2$-Disjoint Connected Subgraphs} problem is \NP-complete even on instances $(G,Z_1,Z_2)$ where $|Z_1|=2$ and $G$ is a line graph. \end{lemma}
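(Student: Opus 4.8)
The plan is to dispense quickly with membership in $\NP$ --- a purported solution $(S_1,S_2)$ is checked for disjointness, connectivity, and the terminal conditions in polynomial time --- and then to prove $\NP$-hardness by a reduction in the spirit of~\cite{HPW09} (which handles general graphs with $|Z_1|=2$), arranged so that the constructed graph is a line graph. The cleanest way to force the line-graph property is to describe a \emph{root graph} $R$ and output $G:=L(R)$. Recall that a set of vertices of $L(R)$ --- that is, a set of edges of $R$ --- induces a connected subgraph of $L(R)$ precisely when those edges form a connected subgraph of $R$. Hence a solution to {\sc $2$-Disjoint Connected Subgraphs} on $(L(R),Z_1,Z_2)$ is exactly a pair of edge-disjoint edge sets $F_1\supseteq Z_1$, $F_2\supseteq Z_2$ of $R$, each spanning a connected subgraph of $R$, and $|Z_1|=2$ becomes ``$Z_1$ is a pair of edges $a,b$ of $R$''. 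Taking $F_1$ inclusion-minimal makes it an $a$--$b$ path of $R$, and enlarging $F_1$ only restricts $F_2$, so the instance is a yes-instance if and only if $R$ has an $a$--$b$ path $P$ with all edges of $Z_2$ in one connected component of $R-E(P)$. So it suffices to prove this edge-deletion variant $\NP$-hard.

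For that I would run a satisfiability-style reduction. Given a formula $\varphi$ with variables $x_1,\dots,x_n$ and clauses $c_1,\dots,c_m$, I would build $R$ from a \emph{variable gadget} per $x_i$ --- a short two-branch ``switch'' between hubs $\ell_i$ and $r_i$, with a ``true'' branch through a vertex $u_i$ and a ``false'' branch through a vertex $v_i$ --- with consecutive switches glued by identifying $r_{i-1}=\ell_i$; pendant edges $a$ at $\ell_1$ and $b$ at $r_n$ form $Z_1$; and $Z_2$ consists of a pendant ``core'' edge at $\ell_1$ together with, for each clause $c_j$, a pendant edge at a new vertex $w_j$, where $w_j$ is joined by an edge to $v_i$ if $x_i\in c_j$ and to $u_i$ if $\overline{x_i}\in c_j$. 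The intention is that an $a$--$b$ path $P$ must run through each switch on its true side or its false side, reading off a truth assignment; the hubs, the unused branches, and the core then stay together in $R-E(P)$, while $w_j$ gets attached to this piece exactly when one of its literal-edges avoids $V(P)$, i.e.\ when $c_j$ is satisfied. The direction ``satisfying assignment $\Rightarrow$ good path'' is then immediate.

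The main obstacle is the converse, namely verifying that the gadgets are ``cheat-proof'': a priori the $a$--$b$ path need not cleanly select one branch of each switch, and it might try to hop between switches through clause vertices, to bypass some switches, or to leave intact structure that reconnects $Z_2$ for the wrong reason. Ruling this out is the delicate heart of the argument: one keeps the switches tight and, wherever the clause-to-core routes would otherwise let $P$ take a shortcut, inserts small bottleneck gadgets, so that any $P$ deviating from an ``honest'' path either separates $Z_2$ or can be rerouted to an honest one; once this is established, a satisfying assignment is read off $P$ and the two directions match. Finally, since $R$ is an ordinary graph, $G:=L(R)$ is a line graph by construction, $Z_1$ is a $2$-element vertex set of $G$, and by the reformulation above $(G,Z_1,Z_2)$ is equivalent to $\varphi$, completing the reduction.
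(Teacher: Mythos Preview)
Your approach is essentially the one the paper takes --- reduce from satisfiability, build a root graph $R$ with a chain of two-branch variable selectors, set $Z_1$ to be the two boundary edges and $Z_2$ to encode the clauses, and output $L(R)$ --- and your reformulation of the line-graph instance as an edge-deletion problem on $R$ is exactly how the paper reasons as well. The one substantive gap is precisely the step you flag as ``the delicate heart of the argument'': your cheat-proofing is left as a promissory note (``insert small bottleneck gadgets''), and without it the reduction is incomplete, since an $a$--$b$ trail in $R$ can indeed detour through the clause vertices $w_j$ and avoid committing to a branch in some switches.

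The paper closes this gap not with extra gadgets but with a different choice of $Z_2$: instead of a single pendant edge at each clause vertex, it lets $Z_2$ consist of \emph{all} edges of $R$ incident to clause vertices. Since $S_1$ and $S_2$ must be disjoint in $L(R)$, no edge incident to a clause vertex can lie in $S_1$; the $e$--$f$ path is therefore confined to the variable ladder automatically and is forced to traverse exactly one side of each selector, reading off a truth assignment directly. (The paper also replaces each literal vertex $x_i$ by one copy per clause occurrence, so that each clause vertex has degree at most~$3$ and the ``$Q$ passes through all copies of $x_i$ or none'' step is immediate.) With this choice of $Z_2$ the converse direction becomes a short check rather than a gadget-by-gadget case analysis; you should adopt it in place of the unspecified bottlenecks.
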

\begin{proof}
Note that the problem is in \NP. We reduce from $3$-SAT. Let $\phi = \phi(x_1, \dots , x_n)$ be an instance of $3$-SAT with clauses $C_1, \dots, C_m$. We construct a corresponding graph $G=(V,E)$ as follows.
We start with two disjoint paths $P$ and $\bar{P}$ on vertices $p_i, x_i,q_i$ and $\bar{p}_i,\bar{x}_i,\bar{q}_i$, respectively, where $x_i, \bar{x}_i$ correspond to the positive and negative literals in $\phi$, respectively. To be more precise, we define:
\begin{equation*}
P= p_1,x_1,q_1, p_2,x_2,q_2, \dots, p_n, x_n, q_n,\; \mbox{and}\;\;
\overline{P}=\bar{p}_1, \bar{x}_1, \bar{q}_1, \dots, \bar{p}_n, \bar{x}_n, \bar{q}_n,
\end{equation*}
We add the two edges $e=p_1\bar{p}_1, \;\mbox{and}\; f=q_n\bar{q}_n$. 
For $i=1, \dots, n-1$, we also add
edges $q_i\bar{p}_{i+1}$ and $\bar{q}_ip_{i+1}$.
We now replace each $x_i$ by vertices $x_i^{j_1}, x_i^{j_2}, \dots x_i^{j_r}$, where $j_1,  \dots, j_r$ are the indices of the clauses $ C_j$ that contain $x_i$. That is, we replace
the subpath $p_i,x_i,q_i$ of $P$ by the path $p_i, x_i^{j_1}, x_i^{j_2}, \dots x_i^{j_r}, q_i$. We do the same path replacement operation on $\bar{P}$ with respect to every $\bar{x}_i$. 
Finally, we add every clause~$C_j$ as a vertex and add an edge between $C_j$ and $x_i^j$ if and only if $x_i\in C_j$, and between $C_j$ and $\bar{x}_i^j$ if and only if $\bar{x}_j\in C_j$. This completes the description of $G=(V,E)$. We refer to Figure~\ref{f-ex} for an illustration of our construction.

\begin{figure}
		\resizebox{5.5cm}{!} {
			\begin{minipage}{0.45\textwidth}
				\centering
				\[
				\xymatrix@R3pc @C0.75pc{ 
					&& {C_1}{\circ} \ar@{-}[d] \ar@{-}@/_-1.5pc/[drrrrr] \ar@{-}@/_4.25pc/[ddrrrrrrrrrr]\\
					&{p_1}{\bullet} \ar@{-}[r]  & {x^1_1} {\circ} \ar@{-}[r]& {\circ}  \ar@{-}[r] & {\circ} \ar@{-}[r] & {q_1}{\bullet} \ar@{-}[r] \ar@{-}[rd]  & {p_2}{\bullet}\ar@{-}[r] \ar@{-}[ld]&  {x^1_2}{\circ}  \ar@{-}[r] & {\circ} \ar@{-}[r] & {\circ} \ar@{-}[r] & {q_2}{\bullet} \ar@{-}[r] \ar@{-}[rd] & {p_3}{\bullet} \ar@{-}[r] \ar@{-}[ld] & {\circ} \ar@{-}[r] & {\circ} \ar@{-}[r]  & {q_3}{\bullet} \\
					&{\bar{p_1}}{\bullet} \ar@{-}[u]^*+{ e} \ar@{-}[r] & {\bar{x^1_1}} {\circ} \ar@{-}[r]& {\circ}  \ar@{-}[r] & {\circ} \ar@{-}[r] & {\bar{q_1}}{\bullet} \ar@{-}[r]  & {\bar{p_2}}{\bullet} \ar@{-}[r]&  {\circ}  \ar@{-}[r] & {\circ} \ar@{-}[r] & {\circ} \ar@{-}[r] & {\bar{q_2}}{\bullet} \ar@{-}[r] & {\bar{p_3}}{\bullet} \ar@{-}[r] & {\bar{x^1_3}}{\circ} \ar@{-}[r] & {\circ} \ar@{-}[r] &  {\bar{q_3}}{\bullet} \ar@{-}[u]^*+{ f} \\
				}
				\]
			\end{minipage}
		
		}
		\caption{The construction described with edges added for the clause $C_1=(x_1 \lor x_2 \lor \bar{x_3})$.}\label{f-ex}
\end{figure}
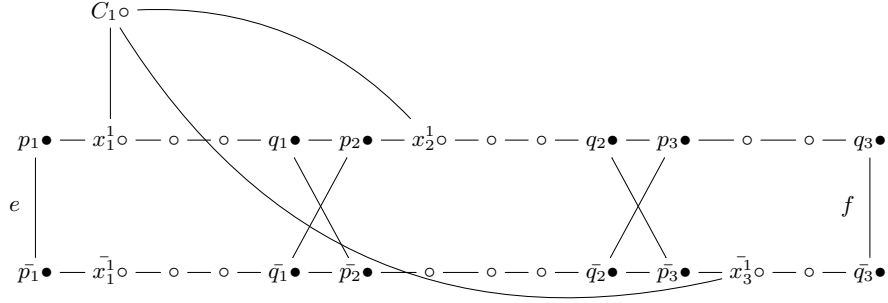	

We now focus on the line graph $L=L(G)$ of $G$.  Let $Z_1 = \{e, f\} \subseteq E=V(L)$ and let $Z_2$ consist of all vertices of $L$ that correspond to edges in $G$ that are incident to some $C_j$. Note that $Z_1$ and $Z_2$ are disjoint.
Moreover, each clause $C_j$ corresponds to a clique of size at most~$3$ in $L$, which we call the clause clique of $C_j$.
We claim that  $\phi$ is satisfiable if and only if the instance $(L,Z_1,Z_2)$ of {\sc $2$-Disjoint Connected Subgraphs} is a yes-instance.

First suppose that $\phi$ is satisfiable. Let $\tau$ be a satisfying truth assignment for~$\phi$.
In~$G$, we let 
$P^1$ denote the unique path whose first edge is $e$ and whose last edge is $f$ and that passes through all $x_i^j \in V$ if $x_i=0$ and through all $\bar{x}_i^j$ if
$x_i=1$. In~$L$ we let $S_1$ consist of all vertices of $L(P^1)$; note that $Z_1=\{e,f\}$ is contained in $S_1$ and that $S_1$ is connected.
We  let $P^2$ denote the ``complementary'' path in $G$ whose first edge is $e$ and whose last edge is $f$ but
that passes through all $x_i^j$ if and only if $P^1$ passes through all $\bar{x}_i^j$, and conversely ($i=1, \dots, n$).
In $L$, we put all vertices of $L(P^2)$, except $e$ and $f$, together with all vertices of $Z_2$ in~$S_2$. 
As $\tau$ satisfies $\phi$, some vertex of each clause clique is adjacent to a vertex of~$P^2$.
Hence, as $P^2$ is a path, $S_2$ is connected and we found a solution for $(L,Z_1,Z_2)$.

Now suppose that $(L,Z_1,Z_2)$ is a yes-instance of {\sc $2$-Disjoint Connected Subgraphs}. Then $V(L)$ can be partitioned into 
two vertex-disjoint connected sets $S_1$ and $S_2$ such that $Z_1\subseteq S_1$ and $Z_2\subseteq S_2$.
In particular, $L[S_1]$ contains a path~$P^1$ from $e$ to $f$. In fact, we may assume that $S_1=V(P^1)$, as we can move every other vertex of $S_1$ (if they exist) to $S_2$ without disconnecting $S_2$. 

Note that $P^1$ corresponds to a 
connected subgraph that contains the adjacent vertices $p_1$ and $\bar{p}_1$ as well as the adjacent vertices $q_n$ and $\bar{q}_n$.
Hence, we can modify $P^1$ into a path $Q$ in $G$ that starts in $p_1$ or $\bar{p}_1$ and that ends in $q_n$ or $\bar{q}_n$. Note that $Q$ contains no edge incident to a clause vertex $C_j$, as those edges correspond to vertices in $L$ that belong to $Z_2$. 
Hence, by construction, $Q$ ``moves from left to right'', that is, $Q$ cannot pass through both some $x_i^j$ and $\bar{x}_i^j$ (as then $Q$ needs to pass through either $x_i^j$ or $\bar{x}_i^j$ again implying that $Q$ is not a path). 

Moreover, if $Q$ passes through some $x_i^j$, then $Q$ must pass through
all vertices $x_i^{j_h}$. Similarly,  if $Q$ passes through some $\bar{x}_i^j$, then $Q$ must pass through
all vertices $\bar{x}_i^{j_h}$. As $Q$ connects the edges $p_1\bar{p}_1$ and $q_n\bar{q}_n$, we conclude that $Q$ must pass, for $i=1,\ldots,n$, through either every $x_i^{j_h}$ or through every $\bar{x}_i^{j_h}$. Thus we may define a truth assignment $\tau$ by setting 
\begin{equation*}
x_i=
\begin{cases}
1\;  \text{if}\; Q\; \text{passes through all}\; \bar{x}_i^j\\
0\; \text{if}\; Q\; \text{passes through all}\; x_i^j.
\end{cases}
\end{equation*} 
We claim that $\tau$ satisfies $\phi$. For contradiction, assume some clause $C_j$ is not satisfied. Then $Q$ passes through all its literals. However, then in $S_2$, the vertices of $Z_2$ that correspond to edges incident to $C_j$ are not connected to other vertices of $Z_2$, a contradiction. This completes the proof of the lemma.\qed
\end{proof}

\noindent
A straightforward modification of the reduction of Lemma~\ref{l-split} gives us Lemma~\ref{l-bipartite}. We can also obtain Lemma~\ref{l-bipartite} by subdividing the graph $G$ in the proof of Lemma~\ref{l-line} twice (to get a bipartite graph) or $p$ times (to get a graph of girth at least $p$).

\begin{lemma}[\cite{HPW09}]\label{l-split}
{\sc $2$-Disjoint Connected Subgraphs} is \NP-complete for split graphs, or equivalently, $(2P_2,C_4,C_5)$-free graphs.
\end{lemma}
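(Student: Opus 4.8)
\textbf{Proof plan for Lemma~\ref{l-split}.}

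The plan is to give a polynomial-time reduction from \textsc{Monotone Not-All-Equal $3$-Satisfiability}, that is, $2$-colourability of $3$-uniform hypergraphs, which is a well-known \NP-complete problem; membership of \textsc{$2$-Disjoint Connected Subgraphs} in \NP\ is immediate. So let $\phi$ be an instance with variables $x_1,\dots,x_n$ and clauses $C_1,\dots,C_m$, each $C_j$ a set of three distinct variables with no literal negated; we must decide whether there is a truth assignment under which every $C_j$ has both a true and a false variable.

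First I would build a split graph $G$. Take a clique $K=\{v_1,\dots,v_n\}$ with one vertex per variable, and for each clause $C_j$ add two fresh vertices $t_j$ and $f_j$, each made adjacent to exactly the vertices $v_i$ with $x_i\in C_j$. Then $\{t_1,\dots,t_m,f_1,\dots,f_m\}$ is an independent set while $K$ is a clique, so $G$ is split (hence $(2P_2,C_4,C_5)$-free). Put $Z_1=\{t_1,\dots,t_m\}$ and $Z_2=\{f_1,\dots,f_m\}$; these are disjoint. The idea is that $S_1\cap K$ and $S_2\cap K$ will record the ``true'' and ``false'' sides of a splitting assignment, and having one $t_j$ and one $f_j$ per clause is exactly what forces each clause to reach into both sides.

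The correctness proof rests on a single structural fact: in a split graph every subset of $K$ induces a connected graph, whereas a vertex outside $K$ that lies in a connected set $S$ with $|S|\ge 2$ must have a neighbour in $S\cap K$. For the forward direction, from a not-all-equal assignment $\tau$ I set $S_1=Z_1\cup\{v_i:\tau(x_i)=1\}$ and $S_2=Z_2\cup\{v_i:\tau(x_i)=0\}$: these are disjoint, and since no clause is monochromatic each $t_j$ has a neighbour among the $v_i$ that are true and each $f_j$ one among the $v_i$ that are false, so $G[S_1]$ and $G[S_2]$ are both connected. For the converse, take a solution $(S_1,S_2)$ and assume $m\ge 2$ (the cases $m\le 1$ are trivial: both instances are then yes-instances). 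Since $Z_2\subseteq S_2$ and $S_1\cap S_2=\emptyset$ we have $S_1\subseteq K\cup Z_1$, and then connectedness of $G[S_1]$ forces $(S_1\cap K)\cap C_j\neq\emptyset$ for every $j$; symmetrically $(S_2\cap K)\cap C_j\neq\emptyset$ for every $j$, while $S_1\cap K$ and $S_2\cap K$ are disjoint. Declaring $x_i$ true exactly when $v_i\in S_1\cap K$ then gives an assignment in which every clause has a true variable (in $S_1\cap K$) and a false one (in $S_2\cap K$), i.e.\ a not-all-equal assignment.

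I do not anticipate a real obstacle; the only points deserving a moment's care arise in the converse direction --- that connectivity of $S_1$ cannot be ``routed through'' vertices of $Z_2$ or through other clause vertices (impossible because $Z_1$ and $Z_2$ are disjoint and the clause vertices form an independent set), and that clique vertices lying in neither $S_1$ nor $S_2$ may be coloured arbitrarily without affecting the argument. Finally, a companion construction --- making $K$ independent and adding to each of $Z_1,Z_2$ one further vertex adjacent to all of $K$ --- turns $G$ into a bipartite graph satisfying the same equivalence, and repeatedly subdividing edges then makes the girth arbitrarily large; this is the modification underlying the bipartite and large-girth variants mentioned in the remark preceding this lemma.
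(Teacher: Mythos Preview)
The paper does not supply its own proof of Lemma~\ref{l-split}; the result is quoted from~\cite{HPW09} and used as a black box, with the surrounding remark only asserting that a ``straightforward modification'' of that (unstated) reduction yields Lemma~\ref{l-bipartite}. Hence there is nothing to compare your argument against line by line.

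That said, your reduction from \textsc{Monotone NAE-3SAT} is correct and self-contained. The forward direction is fine once one notes that a not-all-equal assignment necessarily has at least one true and at least one false variable, so both $S_1\cap K$ and $S_2\cap K$ are nonempty cliques. In the converse, your observation that $S_1\subseteq K\cup Z_1$ (because $Z_2\subseteq S_2$) together with the independence of $Z_1$ forces each $t_j$ to have a neighbour in $S_1\cap K\cap C_j$ is exactly the right point; variables in $K\setminus(S_1\cup S_2)$ are harmlessly declared false. Your closing remark about replacing the clique on $K$ by an independent set and adding two universal hub terminals is a clean way to obtain a bipartite instance, and subdividing edges then raises the girth while preserving yes/no status because the subdivision vertices are non-terminals; this matches the spirit of the paper's one-line justification for Lemma~\ref{l-bipartite}.
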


\begin{lemma}\label{l-bipartite}
{\sc $2$-Disjoint Connected Subgraphs} is \NP-complete for  bipartite graphs and for graphs of girth at least $p$, for every integer~$p\geq 3$.
\end{lemma}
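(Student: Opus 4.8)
Membership in \NP{} is immediate, since a pair of disjoint connected sets $(S_1,S_2)$ with $Z_1\subseteq S_1$ and $Z_2\subseteq S_2$ can be guessed and verified in polynomial time, so it remains to prove \NP-hardness in both settings. The plan is to take the \NP-complete instances guaranteed by Lemma~\ref{l-line} (where the input graph is a line graph $L$ with terminal sets $Z_1,Z_2$) and turn each of them, by repeated edge subdivision, into a bipartite graph, respectively a graph of girth at least $p$, while keeping the answer unchanged. One could equally well start from the split-graph instances of Lemma~\ref{l-split} and subdivide those, or modify that reduction directly; I will work from Lemma~\ref{l-line}.

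The engine of the argument is a \emph{subdivision-invariance} claim: if $(G,Z_1,Z_2)$ is an instance of {\sc $2$-Disjoint Connected Subgraphs} and $G'$ arises from $G$ by subdividing one edge $uv$ with a fresh vertex $w$ (the terminal sets, which lie in $V(G)\subseteq V(G')$, being left unchanged), then $(G,Z_1,Z_2)$ and $(G',Z_1,Z_2)$ are equivalent. I would prove this by a short case check. For the forward direction, from a solution $(S_1,S_2)$ in $G$ one obtains a solution in $G'$ by putting $w$ into $S_i$ when $u$ and $v$ both lie in $S_i$, and leaving $w$ unused otherwise: in the first case the path $u\,w\,v$ replaces the lost edge $uv$ and restores connectivity, while disjointness is untouched since $w$ is new. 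For the converse, given a solution $(S_1',S_2')$ in $G'$, the vertex $w$ lies in at most one of the two sets and has degree $2$ in $G'$; deleting $w$ from that set (if present) yields a connected subgraph of $G$, because the only possible neighbours of $w$ are $u$ and $v$, the edge $uv$ is present in $G$, and hence any path through $u\,w\,v$ can be short-cut to $u\,v$. Applying this repeatedly, subdividing \emph{every} edge of a graph exactly $q$ times gives an equivalent instance; this operation multiplies the length of every cycle by $q+1$, and for odd $q$ it makes every cycle length even.

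Finally I would instantiate $q$. Subdividing every edge of the line graph $L$ of Lemma~\ref{l-line} once produces a graph that is bipartite (one class consisting of the original vertices, the other of the new subdivision vertices), so {\sc $2$-Disjoint Connected Subgraphs} is \NP-complete on bipartite graphs. For a fixed $p\ge 3$, subdividing every edge of $L$ exactly $p$ times produces a graph whose every cycle has length at least $3(p+1)\ge p$ — since $L$ contains a cycle, its girth is at least $3$, and subdivision multiplies cycle lengths by $p+1$ — hence the resulting graph has girth at least $p$, and the problem is \NP-complete on graphs of girth at least $p$ as well. I do not anticipate a genuine difficulty here: the only places needing a little care are the bookkeeping in the subdivision-invariance claim, in particular checking in the backward direction that absorbing or removing a degree-$2$ subdivision vertex preserves both connectivity and disjointness, and the parity remark that an odd number of subdivisions per edge is what is needed to guarantee bipartiteness.
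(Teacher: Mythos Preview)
Your argument is correct and is essentially the subdivision approach the paper itself sketches: the paper remarks that one may subdivide the instance graph from Lemma~\ref{l-line} to obtain an equivalent bipartite instance or one of arbitrarily large girth (and alternatively mentions modifying the split-graph reduction of Lemma~\ref{l-split}). Your subdivision-invariance claim and the parity remark supply precisely the details the paper leaves implicit.
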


We are now ready to prove Theorem~\ref{t-main1}.

\medskip
\noindent
{\bf Theorem~\ref{t-main1} (restated)}
{\it Let $H$ be a graph. If $H\ssi sP_1+P_4$, then for every  $k\geq 1$, {\sc $k$-Disjoint Connected Subgraphs} on $H$-free graphs is polynomial-time solvable; otherwise even {\sc $2$-Disjoint Connected Subgraphs} is \NP-complete.}

\begin{proof}
If $H$ contains an induced cycle $C_s$ for some $s\geq 3$, then we apply Lemma~\ref{l-bipartite} by setting $p=s+1$. Now assume that $H$ contains no cycle, that is, $H$ is a forest. If $H$ has a vertex of degree at least~$3$, then $H$ is a superclass of the class of claw-free graphs, which in turn contains all line graphs. Hence, we can apply Lemma~\ref{l-line}. In the remaining case $H$ is a linear forest. If $H$ contains an induced $2P_2$, we apply Lemma~\ref{l-split}. Otherwise $H$ is an induced subgraph of $sP_1+P_4$ for some $s\geq 0$ and we apply Lemma~\ref{l-sp1p4}.\qed
\end{proof}

\section{The Proof of Theorem~\ref{t-main2}}\label{s-main2}

We first prove the following result, which generalizes the corresponding result of {\sc Disjoint Paths} for $P_4$-free graphs due to Gurski and Wanke~\cite{GW06}. We show that we can use the same modification to a matching problem in a bipartite graph.

\begin{lemma}\label{l-p4dcs}
{\sc Disjoint Connected Subgraphs} is polynomial-time solvable for $P_4$-free graphs.
\end{lemma}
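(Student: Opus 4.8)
The plan is to reduce {\sc Disjoint Connected Subgraphs} on $P_4$-free graphs to a matching-type problem, mimicking the approach of Gurski and Wanke for {\sc Disjoint Paths}. Let $(G,Z_1,\ldots,Z_k)$ be an instance with $G$ a $P_4$-free graph. First I would apply Lemma~\ref{l-contract} to assume each $Z_i$ is an independent set, and I would work component by component, so assume $G$ is connected on at least two vertices. The key structural fact is Lemma~\ref{l-p4}: a connected $P_4$-free graph has a spanning complete bipartite subgraph with nonempty sides $A,B$. If $k \geq 3$, then at most two of the sets $S_1,\ldots,S_k$ in any solution can meet both $A$ and $B$ simultaneously would be too strong; instead the right observation is that at most one $S_i$ can intersect both $A$ and $B$ in a way that "uses" the join — more carefully, since $A$ and $B$ are completely joined, any connected set meeting both $A$ and $B$ is automatically connected, so the real constraints come from sets confined to (a component of) $G[A]$ or $G[B]$. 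Recursing into $G[A]$ and $G[B]$ handles those, and a bounded amount of guessing handles the at-most-few sets that straddle the join.

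More concretely, the main case is $k=2$, which is the heart of the matter. Here I would guess, for the (at most one) terminal set $Z_i$ that is split across $A$ and $B$, that $S_i$ contains all of $A\cup B$ is wasteful; rather, observe that if both $Z_1$ and $Z_2$ intersect, say, $B$, then since every vertex of $A$ is adjacent to every vertex of $B$, we can try to connect each $Z_i$ using a single extra vertex of $A$ — this reduces to finding two distinct vertices $a_1,a_2\in A$ with $a_i$ dominating $Z_i$, plus ensuring the parts of $Z_i$ lying inside $A$ get connected, which recurses on $G[A]$. The bipartite-matching flavour appears exactly when we must simultaneously assign disjoint "connector" vertices from one side to several terminal sets: build a bipartite graph with the $Z_i$ on one side and candidate connector vertices on the other, edge when the vertex dominates (the relevant part of) $Z_i$, and look for a perfect matching saturating the $Z_i$'s. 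I would enumerate the $O(1)$-many ways the sides $A,B$ can be distributed among the at most two straddling sets, and for the non-straddling sets recurse into $G[A]$ and $G[B]$ independently since those components are disjoint and separated only through the fully-joined pair.

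The key steps in order: (1) reduce to $Z_i$ independent and $G$ connected via Lemma~\ref{l-contract}; (2) invoke Lemma~\ref{l-p4} to get the spanning complete bipartite subgraph $G[A,B]$; (3) classify each $Z_i$ as contained in $A$, contained in $B$, or straddling both, and argue only a bounded number (in fact the argument should show: if two sets both straddle, or the interaction is complex, we exploit the join to connect cheaply) need special treatment; (4) set up the bipartite matching instance assigning connector vertices to terminal sets that need one vertex from the opposite side; (5) recurse on $G[A]$ and $G[B]$ for the residual sub-instances, combining solutions since $A$–$B$ adjacency makes re-connection free; (6) bound the total work by a polynomial, as the recursion tree has polynomially many nodes (one per node of the cotree) and each node does polynomial matching/guessing work.

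The main obstacle I anticipate is step (3)–(4): correctly bounding and handling how terminal sets interact with the $A$/$B$ split. A connected set may use many vertices on both sides, and a clean invariant is needed so that the recursion into $G[A]$ and $G[B]$ receives well-defined sub-instances (the "residual" terminal sets there must themselves be the images of connected-subgraph requirements). Getting this bookkeeping right — in particular showing that it never helps a solution to route through the join in more than a bounded, enumerable number of patterns, so that the disjointness constraint across the join reduces exactly to a bipartite matching — is where the real work lies; the matching and recursion machinery itself is then routine.
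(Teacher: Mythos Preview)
You are circling the right idea but you miss the one-line simplification that collapses the whole argument. You apply Lemma~\ref{l-p4} to the ambient graph $G$ and then try to manage a recursion along the cotree, worrying about which $Z_i$ straddle the bipartition, which need a connector from the other side, and how to combine recursive sub-instances on $G[A]$ and $G[B]$. The paper instead applies Lemma~\ref{l-p4} to each $G[S_i]$ in a hypothetical solution: since $G[S_i]$ is a connected $P_4$-free graph, it has a spanning complete bipartite subgraph with nonempty parts $A_i,B_i$, and because $Z_i$ is independent (Lemma~\ref{l-contract}) it lies entirely in one part, say $A_i$. Any single vertex $y_i\in B_i$ is then adjacent to all of $Z_i$. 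Hence \emph{every} $S_i$ can be assumed to have the form $Z_i\cup\{y_i\}$ for some non-terminal $y_i$, and the instance is a yes-instance if and only if one can pick pairwise distinct such $y_i$'s. That is a single bipartite matching (the $Z_i$ on one side, non-terminal vertices on the other, an edge when the vertex dominates $Z_i$), with no recursion and no case analysis on $k$.

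Your sketch never reaches this characterisation, and as you yourself flag, step~(3)--(4) is a genuine gap: you would have to argue that coordinating ``which vertices of $B$ are spent as connectors for $A$-side terminal sets versus reserved for the recursive instance on $G[B]$'' can be done in polynomial time. That bookkeeping is not obviously bounded by $O(1)$ guesses, and your remark that ``the main case is $k=2$'' is misleading: the paper handles arbitrary $k$ directly via one matching. A cotree recursion can probably be pushed through, but it is doing work the problem does not require; the missed insight is to look at the structure of each solution piece $G[S_i]$ rather than the structure of $G$.
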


\begin{proof}
For some integer $k\geq 2$, 
let $(G,Z_1,\ldots,Z_k)$ be an instance of {\sc Disjoint Connected Subgraphs} where $G$ is a $P_4$-free graph.
By Lemma~\ref{l-contract} we may assume that every $Z_i$ is an independent set. Now suppose that $(G,Z_1,\ldots,Z_k)$ has a solution $(S_1,\ldots,S_k)$. Then $G[S_i]$ is a connected $P_4$-free graph. Hence, by Lemma~\ref{l-p4}, $G[S_i]$ has a spanning complete bipartite graph on non-empty partition classes $A_i$ and $B_i$. As every $Z_i$ is an independent set, it follows that either $Z_i\subseteq A_i$ or $Z_i\subseteq B_i$. If $Z_i\subseteq A_i$, then every vertex of $B_i$ is adjacent to every vertex of $Z_i$. Similarly, if $Z_i\subseteq B_i$, then every vertex of $A_i$ is adjacent to every vertex of $Z_i$. We conclude that in every set $S_i$, there exists a vertex $y_i$ such that $Z_i\cup \{y_i\}$ is connected. 
	
The latter enables us to construct a bipartite graph $G^{\prime}=(X\cup Y, E^{\prime})$ where $X$ contains vertices $x_1,\ldots,x_k$ corresponding to the set $Z_1,\ldots,Z_k$ and $Y$ is the set of non-terminal vertices of $G$. We add an edge between $x_i\in X$ and $y\in Y$ if and only if $y$ is adjacent to every vertex of $Z_i$. Then $(G, Z_1 \dots Z_k)$ is a yes-instance of {\sc Disjoint Connected Subgraphs} if and only if $G^{\prime}$ contains a matching of size~$k$. It remains to observe that we can find a maximum matching in polynomial time, for example, by using the Hopcroft-Karp algorithm for bipartite graphs~\cite{HK73}.
\qed
\end{proof}

The first lemma of a series of four is obtained by a straightforward reduction from the {\sc Edge Disjoint Paths} problem (see, e.g.~\cite{GW06,HHLS15}), which was proven to be \NP-complete 
by Even, Itai and Shamir~\cite{EIS76}. 
The second lemma follows from the observation that an edge subdivision of the graph $G$ in an instance of {\sc Disjoint Paths} results in an equivalent instance of {\sc Disjoint Paths}; we apply this operation a sufficiently large number of times to obtain a graph of large girth. The third lemma is due to Heggernes et al.~\cite{HHLS15}. We modify their construction to prove the fourth lemma.

\begin{lemma}\label{l-line2}
{\sc Disjoint Paths} is \NP-complete for line graphs.
\end{lemma}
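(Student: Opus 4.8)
The plan is to give the standard polynomial reduction from \textsc{Edge Disjoint Paths} to \textsc{Disjoint Paths} on line graphs. Membership in \NP{} is immediate, and \textsc{Edge Disjoint Paths} is \NP-complete by Even, Itai and Shamir~\cite{EIS76} (see also~\cite{GW06,HHLS15}); recall that it asks, given a graph $G$ and terminal pairs $(s_1,t_1),\ldots,(s_k,t_k)$ with $s_i\neq t_i$, whether $G$ has pairwise \emph{edge}-disjoint paths $Q^1,\ldots,Q^k$ where each $Q^i$ is an $s_i$-$t_i$ path. From such an instance I would build $G'$ by attaching to $G$, for every $i$, a fresh pendant vertex $s_i'$ adjacent only to $s_i$ and a fresh pendant vertex $t_i'$ adjacent only to $t_i$. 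Let $L:=L(G')$, which is a line graph by construction, and let $a_i$ and $b_i$ be the vertices of $L$ corresponding to the edges $s_is_i'$ and $t_it_i'$ of $G'$. The resulting instance $(L, (a_1,b_1),\ldots,(a_k,b_k))$ of \textsc{Disjoint Paths} is clearly computable in polynomial time.

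For the forward direction, given edge-disjoint $s_i$-$t_i$ paths $Q^1,\ldots,Q^k$ in $G$, extend each $Q^i$ to the path $\widehat Q^i$ in $G'$ by prepending $s_is_i'$ and appending $t_it_i'$. The $\widehat Q^i$ remain pairwise edge-disjoint in $G'$ (the pendant edges are new and mutually distinct), so their line graphs $L(\widehat Q^i)$ are pairwise vertex-disjoint paths in $L$, each $L(\widehat Q^i)$ running from $a_i$ to $b_i$; this is a solution of \textsc{Disjoint Paths} on $L$.

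For the backward direction, suppose $P^1,\ldots,P^k$ are pairwise vertex-disjoint with $P^i$ an $a_i$-$b_i$ path in $L$, and write $P^i$ as a sequence of edges $e^i_1,\ldots,e^i_{\ell_i}$ of $G'$ with $e^i_1=s_is_i'$, $e^i_{\ell_i}=t_it_i'$, and consecutive edges sharing an endpoint. First, for $j\neq i$ no $e^i_h$ equals a pendant edge $s_js_j'$ or $t_jt_j'$, since such an edge is the terminal vertex $a_j$ or $b_j$ of $P^j$, contradicting vertex-disjointness; hence $e^i_2,\ldots,e^i_{\ell_i-1}$ are edges of $G$. Because $s_i'$ has degree $1$ in $G'$, the edge $e^i_2$ must be incident to $s_i$, and likewise $e^i_{\ell_i-1}$ is incident to $t_i$. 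Since $e^i_2,\ldots,e^i_{\ell_i-1}$ is a sub-path of $P^i$ in $L$, the edge set $\{e^i_2,\ldots,e^i_{\ell_i-1}\}$ induces a connected subgraph of $G$, and this subgraph contains both $s_i$ and $t_i$, so it contains an $s_i$-$t_i$ path $Q^i$ using only these edges. The full edge sets $\{e^i_1,\ldots,e^i_{\ell_i}\}$ are pairwise disjoint because the $P^i$ are vertex-disjoint in $L$, hence $Q^1,\ldots,Q^k$ are pairwise edge-disjoint, yielding a solution of \textsc{Edge Disjoint Paths} on $G$.

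The one point needing care, and the only real (mild) obstacle, is the backward direction: a path in $L(G')$ does not in general ``unfold'' to a walk in $G'$, because a turn $e_{h-1},e_h,e_{h+1}$ with all three edges meeting at one vertex is a legal path in $L(G')$ but does not trace a walk in $G'$. This is dealt with by not attempting to recover a walk at all, but instead using that the non-pendant part of $P^i$ is a connected edge set of $G$ containing $s_i$ and $t_i$ and extracting an $s_i$-$t_i$ path from it; edge-disjointness of the $Q^i$ is then inherited directly from vertex-disjointness of the $P^i$. It also has to be checked that no $P^i$ can use a pendant-edge vertex reserved as a terminal of another $P^j$, but this is immediate from disjointness of the $P^i$ together with the fixed terminal assignment, and this is precisely what forces each $P^i$ to have $a_i,b_i$ as its endpoints in the first place.
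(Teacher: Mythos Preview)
Your proof is correct and follows precisely the approach the paper indicates: the paper does not spell out a proof of this lemma but simply points to the ``straightforward reduction'' from \textsc{Edge Disjoint Paths} (citing \cite{EIS76,GW06,HHLS15}), and you have written out exactly that reduction, including a careful treatment of the backward direction where a path in $L(G')$ need not unfold to a walk in $G'$.
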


\begin{lemma}\label{l-girth}
For every $g\geq 3$, {\sc Disjoint Paths} is \NP-complete for graphs of girth at least~$g$.
\end{lemma}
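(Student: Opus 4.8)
\emph{For every $g\geq 3$, {\sc Disjoint Paths} is \NP-complete for graphs of girth at least~$g$.}

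\medskip\noindent
\textbf{Proof plan.}
The plan is to reduce from the unrestricted {\sc Disjoint Paths} problem, which is \NP-complete by Karp~\cite{Ka75}, using iterated edge subdivision to eliminate short cycles. Fix $g\geq 3$. Given an instance $(G,(s_1,t_1),\ldots,(s_k,t_k))$ of {\sc Disjoint Paths}, I would form $(G',(s_1,t_1),\ldots,(s_k,t_k))$ by replacing every edge $uv$ of $G$ by a path with $g$ internal vertices (equivalently, subdividing each edge $g+1$ times), keeping the terminals $s_i,t_i$ unchanged (they are original vertices, hence still present in $G'$). This construction is clearly polynomial in $|V(G)|+|E(G)|$ and $g$, so it suffices to establish two things: that $G'$ has girth at least $g$, and that the two instances are equivalent.

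For the girth bound, observe that every cycle $C$ in $G'$ arises from a closed walk in $G$, and since subdivision does not create new adjacencies between original vertices, $C$ must pass through at least two original vertices and hence through at least two subdivided edge-paths; each such edge-path contributes at least $g+1$ edges, so $C$ has length at least $2(g+1) > g$. (In fact even a single subdivided edge together with any return route already forces length $> g$; the point is simply that every cycle must traverse at least one full subdivided edge-path.) For the equivalence, the key observation — stated in the excerpt as the motivation for this lemma — is that subdividing an edge of $G$ yields an equivalent instance of {\sc Disjoint Paths}. Concretely: any collection of pairwise vertex-disjoint $s_i$--$t_i$ paths in $G$ lifts to such a collection in $G'$ by routing along the subdivided edge-paths; conversely, given pairwise vertex-disjoint $s_i$--$t_i$ paths $P^1,\ldots,P^k$ in $G'$, each $P^i$ enters a subdivided edge-path only at one of its two endpoints, and if it enters it must (being a path in a graph where the internal vertices have degree~$2$) traverse the whole edge-path and exit at the other endpoint; contracting each traversed edge-path back to a single edge yields pairwise vertex-disjoint $s_i$--$t_i$ walks in $G$, which contain the required paths, and disjointness is preserved because distinct $P^i$ cannot share an internal vertex of any edge-path.

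I do not expect a genuine obstacle here; the only point requiring a little care is the case analysis showing that a path in $G'$ cannot ``partially'' enter a subdivided edge-path, which follows immediately from the fact that all internal subdivision vertices have degree exactly~$2$, so a path reaching such a vertex has no choice but to continue to the next one. Membership in \NP\ is immediate since a candidate family of paths can be verified in polynomial time. This completes the proof. \qed
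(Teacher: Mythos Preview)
Your proof is correct and follows exactly the approach the paper uses: repeatedly subdivide edges to force the girth above $g$, relying on the observation that a single edge subdivision yields an equivalent instance of {\sc Disjoint Paths}. The paper states this only as a one-line remark, so your detailed verification (in particular the degree-$2$ argument that a path cannot partially enter a subdivided edge) is a faithful elaboration rather than a different route.
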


\begin{lemma}[\cite{HHLS15}]\label{l-split2}
{\sc Disjoint Paths} is \NP-complete for split graphs, or equivalently, $(C_4,C_5,2P_2)$-free graphs.
\end{lemma}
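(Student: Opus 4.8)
The result is due to Heggernes et al.~\cite{HHLS15}; the plan I would follow is a polynomial reduction from {\sc 3-SAT}. Membership in \NP{} is immediate, so it suffices to describe the reduction. Given a formula $\phi$ with variables $x_1,\dots,x_n$ and clauses $C_1,\dots,C_m$, I would build a split graph $G$ with clique part $K$ and independent part $I$, together with a collection of terminal pairs whose number $k$ is polynomial in $n+m$ (so $k$ is part of the input, consistent with {\sc $k$-Disjoint Paths} being polynomial for fixed $k$ by Theorem~\ref{t-known}). The clique $K$ would contain, among other vertices, one \emph{slot} vertex for every occurrence of a literal in a clause, so that routing the path of a clause gadget through a slot means that literal satisfies the clause; all terminals and all ``connector'' vertices used to glue gadgets together are placed in $I$, and one checks at the end that these gluing vertices are pairwise non-adjacent, so that $G$ really is split (equivalently, $(2P_2,C_4,C_5)$-free, as recalled in Lemma~\ref{l-split}).

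The gadgets I would use are: a \emph{variable gadget} for each $x_i$, namely a terminal pair whose only short routes run through one of two designated sets of $K$, the choice encoding the truth value of $x_i$, arranged so that the ``true'' route occupies exactly the clique vertices that block the slots of the negative occurrences of $x_i$, and symmetrically for the ``false'' route; a \emph{clause gadget} for each $C_j$, namely a terminal pair both of whose terminals are adjacent precisely to the three slot vertices of $C_j$, so its path can be completed if and only if some slot of $C_j$ is still free; and a small \emph{consistency (blocker) gadget} for each literal occurrence, namely a terminal pair that is forced onto the corresponding slot vertex unless the relevant variable gadget has taken the ``freeing'' route, ensuring that a slot is free precisely when its literal is set to true. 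Taking $k$ to be the total number of these pairs, the two directions are: from a satisfying assignment one routes each variable path along the dictated side, each clause path through a free satisfied slot, and each blocker through its alternative vertex or its slot as appropriate, yielding a feasible solution; and conversely a feasible solution induces an assignment by reading off the side chosen by each variable path, and if some clause were left unsatisfied then all three of its slots would simultaneously be occupied yet needed, a contradiction.

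The delicate point — and the reason this has to be engineered carefully rather than built naively — is that $K$ is a complete graph, so a routed path may shortcut arbitrarily between clique vertices, and one cannot simply force a path to traverse a long prescribed sequence of clique vertices. I expect this to be the main obstacle. The way around it is to make every clique vertex effectively single-use: restrict the neighbourhoods of the terminals in $I$ so that each vertex of $K$ lies on only a bounded set of candidate paths, and then run a counting argument showing that the number of clique vertices equals the sum over all terminal pairs of the minimum number of clique vertices any path for that pair must use, so that in every solution each clique vertex is used by exactly one path. With this rigidity established, the intended correspondence between routings and truth assignments is forced, and the reduction goes through.
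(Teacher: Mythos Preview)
The paper does not prove this lemma; it is quoted from Heggernes et al.~\cite{HHLS15}. The only structural information the paper records about that construction appears in the proof of Lemma~\ref{l-newhard}: the split decomposition $(C,I)$ produced by~\cite{HHLS15} has $I=\{s_1,\dots,s_k,t_1,\dots,t_k\}$, i.e.\ the independent side consists \emph{exactly} of the terminal vertices, with no additional ``connector'' vertices. Your sketch places extra gluing vertices in $I$; that is a genuine deviation, and one that requires care: any non-terminal vertex in $I$ is available for routing, so a path could take the shape $s_i$--$K$--connector--$K$--$t_i$ and bypass your intended bottleneck. This is not fatal (you can neutralise such vertices by promoting them to dummy terminal pairs, which is in fact closer to what~\cite{HHLS15} does), but as written it is a loose end.

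Your core idea---making the instance tight so that $|K|$ equals the sum over pairs of the minimum number of clique vertices each path must spend, thereby forcing every clique vertex to be used exactly once---is the right mechanism for defeating the shortcutting problem in a clique, and it is consistent with the construction used in~\cite{HHLS15}. Since every non-adjacent pair $s_i,t_i\in I$ needs either one common neighbour in $K$ or two clique vertices (one neighbour of each), a tight count really does pin down which clique vertices each path consumes. So the plan is sound, provided you drop the non-terminal connectors (or turn them into terminals) and actually exhibit neighbourhoods realising the tightness; the rest of the outline is a reasonable reconstruction of the cited result.
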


\begin{lemma}\label{l-newhard}
{\sc Disjoint Paths} is \NP-complete for $(4P_1,P_1+P_4)$-free graphs.
\end{lemma}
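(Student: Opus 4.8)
The plan is to reduce from {\sc Disjoint Paths} on split graphs (Lemma~\ref{l-split2}), since split graphs are exactly $(C_4,C_5,2P_2)$-free and are close in spirit to the target class. A split graph $G$ has vertex set partitioned into a clique $K$ and an independent set $I$. Such a graph is already $P_1+P_4$-free? Not quite — we must be careful, because $P_1+P_4$ can occur in a split graph (take an isolated-from-the-clique clique vertex is impossible, but an independent-set vertex adjacent to one end of a $P_4$ living in $K\cup I$ is possible). So the construction will need a modification that kills induced $4P_1$ and induced $P_1+P_4$ simultaneously while preserving the disjoint-paths answer. The natural device is to add a small number of universal (or near-universal) vertices so that no four vertices are pairwise nonadjacent and no vertex sits ``far away'' from a $P_4$, without creating shortcuts that a solution path could exploit. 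To prevent a new universal vertex from being used as a shortcut, one makes it a new terminal of its own trivial pair, i.e. add a pair $(s_{k+1},t_{k+1})$ with $s_{k+1}=t_{k+1}$ forced, or add a pendant structure forcing the universal vertex into a dedicated path; a cleaner trick is to add the universal vertex $u$ together with a private neighbour $u'$ and a new terminal pair $(u,u')$ — then $u$ is forced to lie on the length-one path $uu'$ and cannot be reused, while its adjacencies still destroy the forbidden induced subgraphs.

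Concretely, I would take an instance $(G,(s_1,t_1),\dots,(s_k,t_k))$ of {\sc Disjoint Paths} with $G$ split, $V(G)=K\cup I$, and build $G^\star$ as follows: keep $G$; add new vertices $a_1,a_2,a_3$, each made adjacent to \emph{every} vertex of $V(G)$ and to each other; and for each $a_j$ add a private pendant $b_j$ adjacent only to $a_j$. The new terminal pairs are $(a_j,b_j)$ for $j=1,2,3$. Since $b_j$ has degree one, any $a_j$-$b_j$ path is exactly the edge $a_jb_j$, so the $a_j$'s and $b_j$'s are untouched by the other $k$ paths; hence $(G^\star,\dots)$ is a yes-instance iff $(G,\dots)$ is. For the structural claim: any induced $4P_1$ in $G^\star$ would be an independent set of size $4$; it can contain at most one of $K\cup\{a_1,a_2,a_3\}$ (these form a clique together), at most one $b_j$ per index but distinct $b_j$'s are nonadjacent, so an independent $4$-set would have to be $\{b_1,b_2,b_3,w\}$ with $w\in I$ nonadjacent to all $b_j$ — but that is only size $4$ if all of $b_1,b_2,b_3,w$ are present, and indeed $\{b_1,b_2,b_3,w\}$ \emph{is} independent. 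So three pendants is one too many; instead add only \emph{two} universal vertices $a_1,a_2$ with pendants, plus handle $4P_1$ via the observation that in a split graph $I$ already contains no $4$-independent-set only if $|I|$-structure forbids it, which it does not.

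So the right bookkeeping is: add universal vertices $a_1,\dots,a_r$ (with pendants and forced pairs) where $r$ is chosen so that (i) no induced $4P_1$ survives and (ii) no induced $P_1+P_4$ survives. For $P_1+P_4$: a $P_4$ in $G^\star$ avoiding all $a_j$ and $b_j$ lies in $G$, i.e. in the split graph, hence uses at most two vertices of $I$ and at least two of $K$; the ``extra'' vertex $v$ of the $P_1+P_4$ is nonadjacent to all four $P_4$-vertices, so $v\in I$ and $v$ misses the $\geq 2$ clique-vertices of the $P_4$, which is possible in a split graph — so $P_1+P_4$ genuinely can occur in $G$. Adding universal $a_j$'s does not remove an existing induced $P_1+P_4$ inside $G$. \textbf{This is the main obstacle}: a pure ``add universal vertices'' gadget cannot destroy forbidden induced subgraphs that live entirely inside the original graph. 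The fix is to not start from arbitrary split graphs but to inspect the {\sc Edge Disjoint Paths}-to-line-graph-to-split reduction and verify that the \emph{specific} split graphs it produces are already $P_1+P_4$-free (line graphs of subdivided graphs, or the Heggernes–Heggernes et al.\ construction, have controlled structure: in their split-graph reduction the independent set $I$ corresponds to edges of a bipartite-like auxiliary graph and each $v\in I$ has exactly two neighbours in $K$, while $K$ has bounded ``co-degree''), and then only afterwards add the two universal pendant-gadget vertices to kill $4P_1$. I would therefore: (1)~recall the split-graph hardness construction of Lemma~\ref{l-split2} explicitly; (2)~check it is $P_1+P_4$-free by a short case analysis on how an induced $P_4$ must meet $K$ and $I$ (using that each $I$-vertex has degree $2$ into $K$, any $P_4$ uses two consecutive $K$-vertices that are adjacent to everything relevant, leaving no room for a fifth isolated vertex); (3)~append vertices $a_1,a_2$ universal to the whole graph, pairwise adjacent, each with a pendant $b_1,b_2$, and add forced terminal pairs $(a_1,b_1),(a_2,b_2)$; (4)~verify $4P_1$-freeness: any $4$-independent set avoids $K\cup\{a_1,a_2\}$ (a clique), hence lies in $I\cup\{b_1,b_2\}$, but $|I$-part$|\le 3$ was the original bound — if the original split graph already had no induced $4P_1$ (true since split graphs can have large independent sets, so this must be arranged, e.g.\ the construction's $I$ has independence number $\le 3$, or we add a third universal-pendant gadget and accept a size-$3$ independent set among the $b_j$'s plus re-examine), so the count closes; and (5)~conclude equivalence of the instances since every $a_j$-$b_j$ path is a single edge, leaving the remaining $k$ paths confined to $G$.

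Given the tightness of the $4P_1$ count, the cleanest writeup will instead introduce \emph{one} universal vertex $a$ with pendant $b$ and pair $(a,b)$, which already forbids induced $P_1 + (\text{anything})$ that would use $a$, and separately argue that the base split-graph construction (after subdividing as in Lemma~\ref{l-girth}-style bookkeeping if needed) is both $4P_1$-free (its independence number is at most $3$ by design) and $P_1+P_4$-free (the degree-$2$-into-$K$ property of $I$-vertices forces any induced $P_4$ to occupy two universal-into-the-picture clique vertices, precluding a disjoint isolated vertex). The single extra universal-pendant gadget then handles the one remaining forbidden graph. I would present exactly this: restate the construction, do the two short structural case analyses ($4P_1$ and $P_1+P_4$), note the single gadget fixes what remains, and observe the forced-pendant pairs make the reduction answer-preserving; correctness of {\sc Disjoint Paths} transfers immediately from Lemma~\ref{l-split2}.
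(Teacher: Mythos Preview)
Your proposal has a genuine gap. You correctly identify the right starting point (the split-graph instances of Lemma~\ref{l-split2}) and you correctly diagnose the obstacle: adding universal vertices, no matter how you protect them with pendant terminal pairs, cannot destroy a forbidden induced subgraph that already lives entirely inside the original split graph $G$. Your fallback --- that the specific split graphs produced by the reduction of~\cite{HHLS15} are already $(4P_1,P_1+P_4)$-free, or become so after one more gadget --- is simply false. In that construction the independent set $I$ equals $\{s_1,t_1,\dots,s_k,t_k\}$, so $|I|=2k$ and any four vertices of $I$ form an induced $4P_1$; likewise, an induced $P_4$ in a split graph has the shape $I$--$K$--$K$--$I$, and a fifth vertex of $I$ non-adjacent to the two middle $K$-vertices yields an induced $P_1+P_4$, which the ``each $I$-vertex has degree~$2$ into $K$'' property in no way prevents. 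So neither your universal-vertex gadgets nor the hoped-for structural luck closes the argument.

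The paper's trick is much cleaner and uses exactly the feature you noticed but did not exploit: that $I$ is \emph{precisely} the set of terminals. One simply adds, inside $I$, every edge between terminals belonging to \emph{different} pairs, turning $G'[I]$ into a clique minus the perfect matching $\{s_it_i:1\le i\le k\}$. Now any independent set in $G'$ has at most one vertex in $C$ and at most two in $I$, so $G'$ is $4P_1$-free; and any induced $P_4$ in $G'$ must contain a non-adjacent pair from $I$, hence some $\{s_i,t_i\}$, to which every other vertex of $I$ is adjacent and every vertex of $C$ adjacent to the clique part of the $P_4$ is also adjacent, so no isolated fifth vertex exists and $G'$ is $(P_1+P_4)$-free. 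Crucially, the added edges join terminals of distinct pairs and therefore cannot appear on any path of a solution, so the two instances are equivalent. This is the missing idea: modify $I$ internally with edges that are provably useless for {\sc Disjoint Paths}, rather than bolting on external universal vertices.
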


\begin{proof}
We reduce from {\sc Disjoint Paths} on split graphs, which is \NP-complete by Lemma~\ref{l-split2}. By inspection of this result (see \cite[Theorem~3]{HHLS15}), we note that the instances $(G, \{(s_1,t_1),\ldots,(s_k,t_k)\})$ have the following property: the split graph~$G$ has a split decomposition $(C,I)$, where 
$C$ is a clique, $I$ an independent set, $C$ and $I$ are disjoint, and 
$C \cup I = V(G)$, 
such that $I = \{s_1,\ldots,s_k,t_1,\ldots,t_k\}$. 
Now let $G'$ be obtained from $G$ by, for each terminal $s_i$, adding edges to $s_j$ and $t_j$ for all $j \not= i$. Then consider the instance $(G', \{(s_1,t_1),\ldots,(s_k,t_k)\})$.

We note that $G'[C]$ is still a complete graph, while $G'[I]$ is a complete graph minus a matching. It is immediate that $G'$ is $4P_1$-free. Moreover, any induced subgraph $H$ of $G'$ that is isomorphic to $P_4$ must contain at least two vertices of $I$ and at least one vertex of $C$. If $H$ contains two vertices of $C$, then as $G'[C]$ is a clique, $H$ contains two non-adjacent vertices in $I$. Similarly, if $H$ contains one vertex of $C$ (and thus three vertices of $I$), then $H$ contains two non-adjacent vertices in $I$. Since $C$ is a clique in $G'$ and every (other) vertex of $I$ is adjacent in $G'$ to any pair of non-adjacent vertices of $I$, it follows that $G'$ is $P_1+P_4$-free as well.

We claim that $(G, \{(s_1,t_1),\ldots,(s_k,t_k)\})$ is a yes-instance if and only if $(G', \{(s_1,t_1),\ldots,(s_k,t_k)\})$ is a yes-instance. This is because the edges that were added to $G$ to obtain $G'$ are only between terminal vertices of different pairs. These edges cannot be used by any solution of {\sc Disjoint Paths} for $(G', \{(s_1,t_1),\ldots,(s_k,t_k)\})$, and thus the feasibility of the instance is not affected by the addition of these edges.
\qed\end{proof}

\noindent
We are now ready to prove Theorem~\ref{t-main2}.

\medskip
\noindent
{\bf Theorem~\ref{t-main2} (restated)}
{\it Let $H$ be a graph not in $\{3P_1,2P_1+P_2,P_1+P_3\}$. If $H\ssi P_4$, then {\sc Disjoint Connected Subgraphs}  
is polynomial-time solvable for $H$-free graphs; otherwise even {\sc Disjoint Paths} 
is \NP-complete.}

\begin{proof}
First suppose that $H$ contains a cycle $C_r$ for some $r\geq 3$. Then {\sc Disjoint Paths} is \NP-complete for the class of $H$-free graphs, as {\sc Disjoint Paths} is \NP-complete on the subclass consisting of graphs of girth $r+1$ by Lemma~\ref{l-girth}. Now suppose that $H$ contains no cycle, that is, $H$ is a forest. If $H$ contains a vertex of degree at least~$3$, then the class of $H$-free graphs contains the class of claw-free graphs, which in turn contains the class of line graphs. Hence, we can apply Lemma~\ref{l-line2}. It remains to consider the case where $H$ is a forest with no vertices of degree at least~$3$, that is, when $H$ is a linear forest.

If $H$ contains four connected components, then the class of $H$-free graphs contains the class of $4P_1$-free graphs, and we can use Lemma~\ref{l-newhard}. If $H$ contains an induced $P_5$ or two connected components that each have at least one edge, then $H$ contains the class of $2P_2$-free graphs, and we can use Lemma~\ref{l-split2}. If $H$ contains two connected components, one of which has at least four vertices, then $H$ contains the class of $(P_1+P_4)$-free graphs, and we can use Lemma~\ref{l-newhard} again. As  $H\notin \{3P_1,2P_1+P_2,P_1+P_3\}$, this means that in the remaining case $H$ is an induced subgraph of $P_4$. In that case even {\sc Disjoint Connected Subgraphs} is polynomial-time solvable on $H$-free graphs, due to Lemma~\ref{l-p4dcs}. \qed
\end{proof}

\section{Reducing the Number of Open Cases to Three}\label{s-reduce}

Theorem~\ref{t-main2} shows that we have the same three open cases for {\sc Disjoint Paths} and {\sc Disjoint Connected Subgraphs}, namely when $H\in \{3P_1,P_1+P_3,2P_1+P_2\}$. We show that instead of six open cases, we have in fact only three.

\begin{proposition}\label{o-1}
{\sc Disjoint Paths} and {\sc Disjoint Connected Subgraphs} are equivalent for $3P_1$-free graphs.
\end{proposition}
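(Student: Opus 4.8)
The plan is to establish polynomial-time many-one reductions in both directions between {\sc Disjoint Paths} and {\sc Disjoint Connected Subgraphs} restricted to $3P_1$-free graphs; together these show that one problem is polynomial-time solvable on $3P_1$-free graphs exactly when the other one is.

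The easy direction is to reduce {\sc Disjoint Paths} to {\sc Disjoint Connected Subgraphs}: given an instance $(G,(s_1,t_1),\ldots,(s_k,t_k))$, output $(G,Z_1,\ldots,Z_k)$ with $Z_i=\{s_i,t_i\}$. The graph is unchanged, so it stays $3P_1$-free, and the two instances are equivalent: pairwise vertex-disjoint $s_i$-$t_i$ paths are in particular pairwise disjoint connected sets containing $Z_i$, and conversely, if $S_1,\ldots,S_k$ are pairwise disjoint connected sets with $\{s_i,t_i\}\subseteq S_i$, then choosing an $s_i$-$t_i$ path inside the connected graph $G[S_i]$ yields pairwise vertex-disjoint $s_i$-$t_i$ paths. (This direction in fact works for every graph class.)

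For the reverse direction I would exploit that $3P_1$-free graphs have independence number at most $2$. Start from an instance $(G,Z_1,\ldots,Z_k)$ of {\sc Disjoint Connected Subgraphs} with $G$ being $3P_1$-free; we may assume $k\geq 2$, as the cases $k\leq 1$ are trivial for both problems. First apply Lemma~\ref{l-contract} to obtain an equivalent instance $(G',Z_1',\ldots,Z_k')$ in which every $Z_i'$ is independent and $G'$ is still $3P_1$-free (using that $3P_1$ is a linear forest). Since $G'$ has no independent set of size $3$, each $Z_i'$ has at most two vertices. Next, observe that for any index $i$ with $|Z_i'|=1$ one may assume $S_i=Z_i'$ in any solution, since shrinking such a set to its single vertex keeps it connected and only frees vertices for the other sets; hence deleting these vertices from $G'$ and dropping the corresponding constraints yields an equivalent instance $(G'',(Z_j')_{j\in J})$ in which $|Z_j'|=2$ for every $j\in J$, where $G''$ is an induced subgraph of $G'$ and therefore $3P_1$-free. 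Finally, writing $Z_j'=\{s_j,t_j\}$, the same argument as in the easy direction shows that $(G'',(Z_j')_{j\in J})$ is a yes-instance of {\sc Disjoint Connected Subgraphs} if and only if $(G'',((s_j,t_j))_{j\in J})$ is a yes-instance of {\sc Disjoint Paths}, which is the required reduction.

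There is no serious obstacle here: the crux is the structural observation that $3P_1$-freeness forces each (independent) terminal set to have size at most two, after which {\sc Disjoint Connected Subgraphs} with two-element terminal sets literally coincides with {\sc Disjoint Paths}. The only points needing a little care are verifying that Lemma~\ref{l-contract} preserves $3P_1$-freeness and that removing the singleton terminals produces a genuinely equivalent instance; both are routine.
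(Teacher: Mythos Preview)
Your argument is correct and follows the same route as the paper: apply Lemma~\ref{l-contract} so that every terminal set becomes independent, then use $3P_1$-freeness to bound each set to size at most~$2$ and read the result off as a {\sc Disjoint Paths} instance. You add a small amount of extra care (explicitly disposing of singleton terminal sets and spelling out why connected sets yield paths and vice versa) that the paper leaves implicit, but the underlying idea is identical.
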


\begin{proof}
Every instance of {\sc Disjoint Paths} is an instance of {\sc Disjoint Connected Subgraphs}.
Let $(G,Z_1,\ldots,Z_k)$ be an instance of {\sc Disjoint Connected Subgraphs} where $G$ is a $3P_1$-free graph. By Lemma~\ref{l-contract} we may assume that each $Z_i$ is an independent set. Then, as $G$ is $3P_1$-free, each $Z_i$ has size at most~$2$. So we obtained an instance of {\sc Disjoint Paths}. \qed
\end{proof}

\begin{proposition}\label{o-2}
{\sc Disjoint Paths} on $(P_1+P_3)$-free graphs and {\sc Disjoint Connected Subgraphs} on $(P_1+P_3)$-free graphs are 
polynomially
equivalent to {\sc Disjoint Paths} on $3P_1$-free graphs.
\end{proposition}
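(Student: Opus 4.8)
The plan is to establish the three equivalences by combining two trivial reductions, Proposition~\ref{o-1}, and one genuine reduction. Since $3P_1\ssi P_1+P_3$, every $3P_1$-free graph is $(P_1+P_3)$-free, so {\sc Disjoint Paths} on $3P_1$-free graphs is a special case of {\sc Disjoint Paths} on $(P_1+P_3)$-free graphs; and {\sc Disjoint Paths} is itself the special case of {\sc Disjoint Connected Subgraphs} in which every terminal set has size~$2$. Hence it suffices to give a polynomial-time reduction from {\sc Disjoint Connected Subgraphs} on $(P_1+P_3)$-free graphs to {\sc Disjoint Connected Subgraphs} on $3P_1$-free graphs: together with Proposition~\ref{o-1} (which gives the equivalence of {\sc Disjoint Connected Subgraphs} and {\sc Disjoint Paths} on $3P_1$-free graphs) this closes the cycle of reductions.

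For the reduction, let $(G,Z_1,\dots,Z_k)$ be an instance with $G$ $(P_1+P_3)$-free. First apply Lemma~\ref{l-contract} so that every $Z_i$ is independent and $G$ stays $(P_1+P_3)$-free, and then repeatedly delete the unique vertex of any size-$1$ terminal set (together with that set); this is safe because such a vertex can only ever be used by its own set, and deletion preserves $(P_1+P_3)$-freeness. So we may assume $|Z_i|\ge 2$ for all $i$. Now I would exploit the structure of $(P_1+P_3)$-free graphs: $P_1+P_3$ is the complement of the paw (a triangle with a pendant edge), so $G$ is $(P_1+P_3)$-free iff $\overline G$ is paw-free, and a connected paw-free graph is triangle-free or complete multipartite (Olariu); translating back, $G$ is the join $H_1\oplus\cdots\oplus H_m$ of its co-components, each of which is either $3P_1$-free or a disjoint union of cliques. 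In particular $G$ is already $3P_1$-free unless some co-component is a disjoint union of at least three cliques; call such a co-component \emph{bad}. If $G$ is $3P_1$-free we output the preprocessed instance unchanged. If $m=1$ and $G$ is bad, then $G$ is disconnected and each of its $\ge 3$ cliques is a component, so every (size-$\ge 2$, independent) $Z_i$ meets $\ge 2$ components and the instance is a no-instance, which we output. The only remaining case is $m\ge 2$ with at least one bad co-component; here $G$ is connected.

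In that case I would build a $3P_1$-free instance $(G^*,Z_1^*,\dots,Z_k^*)$ as follows. For each bad co-component $H_\ell$: add all edges inside $H_\ell$, turning it into a clique; and for each terminal set $Z_i\subseteq V(H_\ell)$ (the \emph{bad} terminal sets) add a new vertex $g_i$ adjacent exactly to $R_i:=V(G)\setminus V(H_\ell)$ and replace $Z_i$ by $Z_i\cup\{g_i\}$. Leave all other terminal sets unchanged and make all the new vertices $g_i$ pairwise adjacent. This $G^*$ is $3P_1$-free: each co-component that was not already $3P_1$-free has become a clique, and every non-neighbour of a $g_i$ lies in the now-complete set $V(H_{\ell(i)})$, so no induced $3P_1$ contains a $g_i$. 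And the new instance is equivalent to the old one. Since $G$ is the join of its co-components, every vertex of $R_i$ is adjacent to all of $V(H_\ell)\supseteq Z_i$; and a bad $Z_i$ must be connected using such a vertex, since its vertices lie in distinct cliques, hence distinct components, of the cluster graph $H_\ell$. Thus in a minimal solution of the old instance $S_i=Z_i\cup\{w_i\}$ for some $w_i\in R_i$, and in a minimal solution of the new instance $g_i\in S_i^{*}$ forces $S_i^{*}$ to contain a neighbour $r_i\in R_i$ of $g_i$ (it cannot reach $Z_i$ through another $g_j$, which is a terminal of $Z_j^{*}$); a short exchange argument — two vertices of a merged clique are twins in $G^*$ — shows that a non-bad solution set uses no $g_i$ and at most one vertex of each merged clique, hence induces the same connected subgraph of $G$. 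The maps $S_i\mapsto Z_i\cup\{w_i,g_i\}$, $S_j\mapsto S_j$ in one direction and $S_i^{*}\mapsto Z_i\cup\{r_i\}$, $S_j^{*}\mapsto S_j^{*}$ in the other then give the equivalence.

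The delicate step is this last case. The gadget has to do two conflicting things at once: avoid creating a new induced $3P_1$ (forcing $g_i$ to be attached only outside $H_\ell$ and the $g_i$'s to be mutually adjacent) while still forcing $S_i$ to ``grab'' exactly one vertex of $R_i$; and the minimality and twin-exchange arguments — especially the claims that a non-bad solution set never contains a gadget vertex and never contains two vertices of a merged clique — need to be made watertight. The structural description of $(P_1+P_3)$-free graphs, together with the observation that a terminal set inside a bad co-component is forced to connect through a vertex outside it, is exactly what makes the construction possible.
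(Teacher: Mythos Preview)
Your argument is correct, and the gadget construction is sound: cliquifying each bad co-component and attaching a fresh terminal $g_i$ only to the outside indeed yields a $3P_1$-free graph, and the twin-exchange argument for minimal non-bad sets $S_j^*$ goes through because any two vertices of a cliquified $H_\ell$ are true twins in $G^*$ (same neighbours among old vertices and among gadgets alike). The forward and backward maps you describe are disjointness-preserving for the reasons you give.

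The paper takes a noticeably different route after the common Olariu decomposition. Instead of reducing to {\sc Disjoint Connected Subgraphs} on $3P_1$-free graphs and then invoking Proposition~\ref{o-1}, it first argues directly that every $Z_i$ may be taken of size~$2$ (so the instance is already {\sc Disjoint Paths}), merges all $3P_1$-free co-components into a single block $D_1$, and then distinguishes ``type~1'' solutions (every path has length~$2$) from ``type~2'' solutions (some path of length~$3$ inside $D_1$). Type~1 is tested by a bipartite matching with no oracle call; only if that fails is a $3P_1$-free graph $G^*$ built, and there the construction is edge-rewiring inside the cluster parts $D_2\cup\cdots\cup D_p$ rather than adding gadget vertices. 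So the paper gives a Turing reduction with a preliminary matching step and a different $G^*$, whereas you give essentially a single many-one reduction (to DCS on $3P_1$-free graphs) followed by Proposition~\ref{o-1}. Your construction is arguably cleaner and avoids the type-1/type-2 case split and the separate matching check; the paper's version, on the other hand, lands directly in {\sc Disjoint Paths} without passing through Proposition~\ref{o-1}.
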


\begin{proof}
We prove that we can solve {\sc Disjoint Connected Subgraphs} in polynomial time on $(P_1+P_3)$-free graphs if we have a polynomial-time algorithm for {\sc Disjoint Paths} on $3P_1$-free graphs.
Showing this suffices to prove the theorem, as {\sc Disjoint Paths} is a special case of {\sc Disjoint Connected Subgraphs} and $3P_1$-free graphs form a subclass of $(P_1+P_3)$-free graphs.

Let $(G,Z_1,\ldots,Z_k)$ be an instance of {\sc Disjoint Connected Subgraphs}, where $G$ is a $(P_1+P_3)$-free graph.
	 Olariu~\cite{Ol88} proved that every connected $\overline{P_1+P_3}$-free graph is either triangle-free or complete multipartite.
	Hence, the vertex set of $G$ can be partitioned into sets $D_1,\ldots,D_p$ for some $p\geq 1$ such that
	\begin{itemize}
		\item every $G[D_i]$ is $3P_1$-free or the disjoint union of complete graphs, and
		\item for every $i,j$ with $i\neq j$, every vertex of $D_i$ is adjacent to every vertex of $D_j$.
	\end{itemize}
Using this structural characterization, we first argue that we may assume that each $Z_i$ has size~$2$, making the problem an instance of {\sc Disjoint Paths}. Then we show that we can either solve the instance outright or can alter $G$ to be $3P_1$-free.

First, we argue about the size of each $Z_i$.
By Lemma~\ref{l-contract} we may assume that every $Z_i$ is an independent set and is thus contained in the same set $D_j$. If $G[D_j]$ is $3P_1$-free, then this implies that any $Z_i$ that is contained in $D_j$ has size~$2$. 
If $G[D_j]$ is a disjoint union of complete graphs, then each vertex of a $Z_i$ that is contained in $D_j$ belongs to a different connected component of $D_j$ and $Z_i \cup \{v\}$ is connected for every vertex $v \notin D_j$. As at least one vertex $v\notin D_j$ is needed to make such a set $Z_i$ connected, we may therefore assume that for a solution $(S_1,\ldots,S_k)$ (if it exists), $S_i= Z_i \cup \{v\}$ for some $v \notin D_j$. 
The latter implies that we may assume without loss of generality that every such $Z_i$ has size~$2$ as well.
	
If $p=1$, then each connected component of $G$ is $3P_1$-free, and we are done.
Hence, we assume that $p\geq 2$.	
In fact, since any two distinct sets $D_i$ and $D_j$ are complete to each other, the union of any two $3P_1$-free graphs induces a $3P_1$-free graph. Therefore we may assume without loss of generality that only $G[D_1]$ might be $3P_1$-free, whereas $G[D_2], \ldots, G[D_p]$ are disjoint unions of complete graphs.

Recall that $Z_i=\{s_i,t_i\}$ for every
$i\in \{1,\ldots,k\}$ 
and we search for a solution $(P^1,\ldots,P^k)$ where each $P^i$ is a path from $s_i$ to $t_i$. First suppose $s_i$ and $t_i$ belong to $D_1$. Then $P^i$ has length $2$ or $3$ and in the latter case, $V(P^i)\subseteq D_1$. 
Now suppose that  $s_i$ and $t_i$ belong to $D_h$ for some $h\in \{2,\ldots,k\}$. Then~$P^i$ has length exactly~$2$, and moreover, the middle (non-terminal) vertex of $P^i$ does not belong to $D_h$.

We will now check if there is a solution $(P^1,\ldots,P^k)$ such that every $P^i$ has length exactly~$2$. We call such a solution to be of {\it type~1}. 
In a solution of type~1, every $P^i=s_iut_i$ for some non-terminal vertex $u$ of $G$. 
If $s_i$ and $t_i$ belong to $D_h$ for some $h\in \{2,\ldots,p\}$, then $u\in D_j$ for some $j\neq i$. 
If $s_i$ and $t_i$ belong to $D_1$, then $u \in D_j$ for some $j \neq 1$ but also $u\in D_1$ is possible, namely when $u$ is adjacent to both $s_i$ and $t_i$.

Verifying the existence of a type~1 solution
is equivalent to finding a perfect matching in a bipartite graph $G^{\prime}= A \cup B$ that is defined as follows. The set~$A$ consists of one vertex $v_i$ for each pair $\{s_i,t_i\}$. The set $B$ consists of all non-terminal vertices $u$ of $G$. For $\{s_i,t_i\} \subseteq D_1$, there  exists an edge between $u$ and $v_i$ in $G^\prime$ if and only if in $G$ it holds that $u\in D_h$ for some $h\in \{2,\ldots,p\}$ or $u\in D_1$ and $u$ is adjacent to both $s_i$ and $t_i$. For $\{s_i,t_i\}\subseteq D_h$ with $h\in \{2,\ldots,p\}$, there exists an edge between $u$ and $v_i$ in $G^\prime$ if and only if in $G$ it holds that $u\in D_j$ for some $j\in \{1,\ldots,p\}$ with $h\neq j$. 
We can find a perfect matching in $G^\prime$ in polynomial time by using the Hopcroft-Karp algorithm for bipartite graphs~\cite{HK73}.

Suppose that we find that $(G,\{s_1,t_1\},\ldots,\{s_k,t_k\})$ has no solution of type~1. As a solution can be assumed to be of type~1 if $G[D_1]$ is the disjoint union of complete graphs, we find that $G[D_1]$ is not of this form. Hence, $G[D_1]$ is $3P_1$-free. Recall that $G[D_j]$ is the disjoint union of complete graphs for $2 \leq i \leq p$. It remains to check if there is a solution that is of {\it type~2} meaning a solution $(P^1,\ldots,P^k)$ in which at least one $P^i$, whose vertices all belong to $D_1$, has length~$3$. 

To find a  type~2 solution (if it exists) we
construct the following graph $G^*$. We let $V(G^*)=A_1\cup A_2 \cup B_1 \cup B_2$, where
\begin{itemize}
\item $A_1$ consists of all terminal vertices from $D_1$; 
\item $A_2$ consists of all non-terminal vertices from $D_1$;
\item $B_1$ consists of all terminal vertices from $D_2\cup \cdots \cup D_p$; and
\item $B_2$ consists of all non-terminal vertices from $D_2\cup \cdots \cup D_p$.
\end{itemize}
Note that $V(G^*)=V(G)$. To obtain $E(G^*)$ from $E(G)$ we add some edges (if they do not exist in $G$ already) and also delete some edges (if these existed in $G$):
\begin{enumerate}[(i)]
\item for each $\{s_i,t_i\}\subseteq B_1$, add all edges between $s_i$ and vertices of $B_2$, and delete any edges between $t_i$ and vertices of $B_2$;
\item add an edge between every two terminal vertices in $B_1$ that belong to different terminal pairs; and
\item add an edge between every two vertices of $B_2$. 
\end{enumerate}
We note that $G^*[D_1]$ is the same graph as $G[D_1]$ and thus $G^*[D_1]$ is $3P_1$-free. 
Moreover, $G^*[B_1\cup B_2]$ is $3P_1$-free by part (i) of the construction. 
Hence, as there exists an edge between every vertex of $A_1\cup A_2$ and every vertex of $B_1\cup B_2$ in $G$ and thus also in $G^*$, this means that $G^*$ is $3P_1$-free. It remains to prove that $(G,\{s_1,t_1\},\ldots,\{s_k,t_k\})$ and $(G^*,\{s_1,t_1\},\ldots,\{s_k,t_k\})$ are equivalent instances. 

First suppose that $(G,\{s_1,t_1\},\ldots,\{s_k,t_k\})$ has a solution $(P^1,\ldots,P^k)$. Assume that the number of paths of length~$3$ in this solution is minimum over all solutions for $(G,\{s_1,t_1\},\ldots,\{s_k,t_k\})$.
We note that $(P^1,\ldots,P^k)$ is a solution for $(G^*,\{s_1,t_1\},\ldots,\{s_k,t_k\})$ unless there exists some $P^i$ that contains an edge of $E(G)\setminus E(G^*)$. Suppose this is indeed the case. As $G^*[D_1]=G[D_1]$ and every edge between a vertex of $A_1\cup A_2$ and a vertex of $B_1\cup B_2$ also exists in $G^*$, we find that the paths connecting terminals from pairs in $D_1$ are paths in $G^*$. Hence, $s_i$ and~$t_i$ belong to $D_h$ for some $h\in \{2,\ldots,p\}$ and thus $P^i=s_iut_i$ where $u$ is a vertex of $D_j$ for some $j\in \{2,\ldots,p\}$ with $j\neq h$.

As we already found that  $(G,\{s_1,t_1\},\ldots,\{s_k,t_k\})$ has no type~1 solution, there is at least one $P^{i'}$ with length~$3$, so $P^{i'}=s_{i'}vv't_{i'}$ is in $G[D_1]$. However, we can now obtain another solution for $(G,\{s_1,t_1\},\ldots,\{s_k,t_k\})$ by changing $P^i$ into $s_ivt_i$ and $P^{i'}$ into $s_{i'}ut_{i'}$, a contradiction, as the number of paths of length~3 in $(P^1,\ldots,P^k)$ was minimum. We conclude that every $P^i$ only contains edges from $E(G)\cap E(G^*)$, and thus $(P^1,\ldots,P^k)$ is a solution for  $(G^*,\{s_1,t_1\},\ldots,\{s_k,t_k\})$.

Now suppose that  $(G^*,\{s_1,t_1\},\ldots,\{s_k,t_k\})$ has a solution $(P^1,\ldots,P^k)$. Consider a path $P^i$. First suppose that $s_i$ and $t_i$ both belong to $B_1$. Then we may assume without loss of generality that $P^i=s_iut_i$ for some $u\in A_2$. As $B_1$ only contains terminals from pairs in $D_2\cup \ldots \cup D_p$, the latter implies that $P^i$ is a path in $G$ as well. Now suppose that $s_i$ and $t_i$ both belong to $A_1$. Then we may assume without loss of generality that $P^i=s_iut_i$ for some non-terminal vertex of $V(G)=V(G^*)$ or $P^i=s_iuu't_i$ for two vertices $u,u'$ in $A_2\subseteq D_1$. Hence, $P^i$ is a path in $G$ as well.
We conclude that $(P^1,\ldots,P^k)$ is a solution for $(G,\{s_1,t_1\},\ldots,\{s_k,t_k\})$. This completes our proof.
\qed
\end{proof}

\section{Exact Algorithms}\label{s-exact}

In this section, 
we briefly mention exact algorithms. 
Using Held-Karp type dynamic programming techniques~\cite{Bellman,HeldKarp}, we can obtain exact algorithms for {\sc Disjoint Paths} and {\sc Disjoint Connected Subgraphs} running in time $O(2^nn^2m)$ and $O(3^nkm)$, respectively.

\begin{theorem}\label{t-exact1}
{\sc Disjoint Paths} can be solved in $O(2^nn^2k)$ time.
\end{theorem}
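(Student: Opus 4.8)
The plan is to design a Held--Karp style dynamic program over subsets of $V(G)$. The instance is $(G,\{(s_1,t_1),\dots,(s_k,t_k)\})$ with $n=|V(G)|$. The key observation is that a solution consists of $k$ vertex-disjoint paths, and a collection of vertex-disjoint paths can be built up incrementally: we process the terminal pairs one at a time, and for the pair currently being routed we grow its path one vertex at a time, while keeping track of (a) which vertices have been used so far by all the already-routed paths together with the partial current path, and (b) the current endpoint of the path we are presently growing, and (c) the index of the pair we are presently routing. Formally I would define a boolean table $T[i,v,S]$ which is true iff there exist vertex-disjoint paths $P^1,\dots,P^{i-1}$ (with $P^j$ an $s_j$-$t_j$ path) together with a path $Q$ from $s_i$ to $v$, all pairwise vertex-disjoint, whose union of vertex sets is exactly $S$ (so in particular $v\in S$, $s_j,t_j\in S$ for $j<i$, and $s_i\in S$). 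The original instance is a yes-instance iff $T[k,t_k,S]$ is true for some $S\subseteq V(G)$.

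The recurrences are then routine. The base case is $T[1,s_1,\{s_1\}]=\mathrm{true}$. There are two kinds of transitions. First, \emph{extend the current path}: from $T[i,v,S]$ with $v\neq t_i$ we may move to $T[i,w,S\cup\{w\}]$ for any $w\notin S$ with $vw\in E(G)$. Second, \emph{finish the current pair and start the next}: from $T[i,t_i,S]$ with $i<k$ we move to $T[i+1,s_{i+1},S\cup\{s_{i+1}\}]$, provided $s_{i+1}\notin S$. (If $s_i=t_i$ never happens since the pairs are distinct vertices; and if $s_{i+1}\in S$ already the instance is infeasible along this branch, so we simply do not create the entry.) Processing the states in order of increasing $i$ and, within each $i$, in order of increasing $|S|$, every entry is computed from previously computed entries, so a single pass suffices.

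For the running time: the table has $O(kn2^n)$ entries, but one must be slightly careful because we want $O(2^nn^2k)$, not $O(2^nnk\cdot n)$ naive extension cost. The point is that from a fixed state $(i,v,S)$ the extend-transition considers at most $\deg(v)\le n$ neighbours, and the finish-transition is $O(1)$; so the total work is $\sum_{i,v,S} O(\deg(v)+1) = O(k2^n \sum_v \deg(v)) = O(k2^n \cdot 2m)$, which is $O(2^nkm)$ and hence also $O(2^nn^2k)$ since $m\le n^2$. One reads off the answer by scanning all $S$ for $T[k,t_k,S]$. I would also remark that the space is $O(kn2^n)$, or $O(n2^n)$ if one only keeps the two most recent values of $i$, and that a solution (not just a yes/no answer) can be recovered by standard back-pointers.

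The main obstacle --- really the only subtlety --- is getting the bookkeeping of the state exactly right so that the paths produced are genuinely vertex-disjoint and each $P^j$ genuinely runs from $s_j$ to $t_j$. The set $S$ must record \emph{all} used vertices (including the completed paths), because otherwise a later path could reuse a vertex; and we must insist $v\in S$ and handle the ``hand-off'' from pair $i$ to pair $i+1$ by adding $s_{i+1}$ to $S$ at that moment and forbidding it if it was already used. Once the invariant on $T[i,v,S]$ is stated precisely, the correctness proof is a straightforward induction in both directions (every true table entry yields the claimed family of disjoint paths; and any family of disjoint paths, read off in the order of the pairs and one vertex at a time, makes the corresponding entries true), and the time bound follows from the degree-sum argument above.
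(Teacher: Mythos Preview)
Your proposal is correct and follows essentially the same Held--Karp dynamic program as the paper: the state $T[i,v,S]$ coincides with the paper's $D[S,v,i]$, your two transitions are precisely the paper's recurrences written forward instead of backward, and your degree-sum time analysis yields $O(2^n k m)\le O(2^n n^2 k)$, matching the paper's bound.
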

\begin{proof}
We devise a Held-Karp type~\cite{HeldKarp,Bellman} dynamic programming algorithm. Given a set $S \subseteq V(G)$, a vertex $v \in S$, and an integer $i \in\{1,\ldots,k\}$, let $D[S,v,i]$ be true if and only if $S$ can be partitioned into vertex-disjoint paths $P^1,\ldots,P^i$ such that $P^i$ starts in $s_i$ and ends in $v$ and $P^j$ is an $s_j$-$t_j$ path for each $j \in \{1,\ldots,i-1\}$.
Then we set $D[S,v,1]$ to true if and only if $S$ is equal to the vertex set of an $s_1$-$v$ path. The correctness of the base case is immediate from the definition.
Beyond the base case, we set $D[S,s_i,i] = D[S\setminus\{s_i\},t_{i-1}, i-1]$ and for all $v \not= s_i$, $D[S,v,i]$ is set to true if and only if there is a neighbour $w \in S$ of $v$ for which $D[S\setminus\{v\},w,i]$ is true.
Indeed, if $S$ can be partitioned into vertex-disjoint paths $P^1,\ldots,P^i$ such that $P^i$ starts in $s_i$ and ends in $v$ and $P^j$ is an $s_j$-$t_j$ path for each $j \in \{1,\ldots,i-1\}$, then 
\begin{itemize}
\item if $v=s_i$, then $P^i$ is a single-vertex path and thus $S\setminus\{s_i\}$ can be partitioned into vertex-disjoint paths $P^1,\ldots,P^{i-1}$ such that $P^j$ is an $s_j$-$t_j$ path for each $j \in \{1,\ldots,i-1\}$, and thus $D[S\setminus\{s_i\},t_{i-1},i-1]$ is true;
\item otherwise, let $w$ be the vertex preceding $v$ on $P^i$, and thus $S\setminus\{v\}$ can be partitioned into vertex-disjoint paths $P^1,\ldots,P^{i-1},Q^i$ such that $Q^i$ starts in $s_i$ and ends in $w$ ($Q^i$ is the part of $P^i$ from $s_i$ to $w$) and $P^j$ is an $s_j$-$t_j$ path for each $j \in \{1,\ldots,i-1\}$, and thus $D[S\setminus\{v\},w,i]$ is true.
\end{itemize}
Conversely, if $v=s_i$ and $D[S\setminus\{s_i\},t_{i-1},i-1]$ is true, then $S\setminus\{s_i\}$ can be partitioned into vertex-disjoint paths $P^1,\ldots,P^{i-1}$ such that $P^j$ is an $s_j$-$t_j$ path for each $j \in \{1,\ldots,i-1\}$, and thus $S$ can be partitioned into vertex-disjoint paths $P^1,\ldots,P^i$ such that $P^i$ starts and ends in $s_i$ and $P^j$ is an $s_j$-$t_j$ path for each $j \in \{1,\ldots,i-1\}$. Hence, $D[S,s_i,i]$ is true. If $v\not=s_i$ and there is a neighbour $w \in S$ of $v$ for which $D[S\setminus\{v\},w,i]$ is true, meaning that $S\setminus\{v\}$ can be partitioned into vertex-disjoint paths $P^1,\ldots,P^i$ such that $P^i$ starts in $s_i$ and ends in $w$ and $P^j$ is an $s_j$-$t_j$ path for each $j \in \{1,\ldots,i-1\}$, then $S$ can be partitioned into vertex-disjoint paths $P^1,\ldots,P^{i-1},Q^i$ such that $Q^i$ starts in $s_i$, follows $P^i$ and ends in $v$, and $P^j$ is an $s_j$-$t_j$ path for each $j \in \{1,\ldots,i-1\}$. Hence, $D[S,v,i]$ is true.

Finally, the given instance of {\sc Disjoint Paths} is a yes-instance if and only if there is a set $S \subseteq V(G)$ for which $D[S,t_k,k]$ is true. The correctness follows by definition.

It is immediate that the running time of the algorithm is $O(2^nn^2k)$, as there are $2^nnk$ table entries that each require at most $O(n)$ time to fill.
\qed\end{proof}

\begin{theorem}\label{t-exact2}
{\sc Disjoint Connected Subgraphs} can be solved in $O(3^nkm)$ time.
\end{theorem}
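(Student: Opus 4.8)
{\sc Disjoint Connected Subgraphs} can be solved in $O(3^nkm)$ time.

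I would mirror the Held--Karp strategy of Theorem~\ref{t-exact1}, observing that for {\sc Disjoint Connected Subgraphs} the only ``structure'' one needs to track while building a solution is connectivity of an induced subgraph, which can be tested directly, so no auxiliary endpoint coordinate is required. We may assume $G$ is connected: otherwise each set $S_i$ of any solution lies inside a single component, so the instance splits into independent per-component subproblems (and is a no-instance if some $Z_i$ meets two components). We may also assume the $Z_i$ are pairwise disjoint and nonempty, whence $k\le n$. For $S\subseteq V(G)$ and $i\in\{0,1,\ldots,k\}$, let $D[S,i]$ be true if and only if $S$ can be partitioned into connected sets $S_1,\ldots,S_i$ with $Z_j\subseteq S_j$ for every $j\in\{1,\ldots,i\}$. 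The base case is $D[\emptyset,0]=\mathrm{true}$ and $D[S,0]=\mathrm{false}$ for $S\neq\emptyset$.

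For the recurrence, I would set $D[S,i]$ to true if and only if there is a subset $S'$ with $Z_i\subseteq S'\subseteq S$ such that $G[S']$ is connected and $D[S\setminus S',i-1]$ is true. Correctness is immediate in both directions: if $(G[S],Z_1,\ldots,Z_i)$ has a solution $(S_1,\ldots,S_i)$, then $S'=S_i$ is a valid witness, since $G[S_i]$ is connected, $Z_i\subseteq S_i$, and $(S_1,\ldots,S_{i-1})$ partitions $S\setminus S_i$ and witnesses $D[S\setminus S',i-1]$; conversely, any witness $S'$ together with a partition realising $D[S\setminus S',i-1]$ reconstructs such a solution. Since $D[\cdot,i]$ depends only on $D[\cdot,i-1]$, the table is filled layer by layer in $i=0,1,\ldots,k$. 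Finally, $(G,Z_1,\ldots,Z_k)$ is a yes-instance if and only if $D[S,k]$ is true for some $S\subseteq V(G)$: the union of the sets of any solution is exactly such an $S$, and conversely.

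For the running time, I would first precompute, for every $S\subseteq V(G)$, whether $G[S]$ is connected; running one graph search per subset costs $\sum_{S}O(|S|+|E(G[S])|)=O((n+m)2^n)=O(m\,2^n)$ in total (using $m=\Omega(n)$ for connected $G$), which is absorbed by the bound below. The table has $O(2^n k)$ entries, and filling $D[S,i]$ via the recurrence takes $O(2^{|S|})$ time, iterating over the subsets $S'\subseteq S$ and reading each precomputed connectivity value in $O(1)$. Hence the dynamic program runs in time $O(k\sum_{S\subseteq V(G)}2^{|S|})=O(k\,3^n)$, and, adding the precomputation and the final $O(2^n)$ scan, the overall running time is $O(3^nkm)$.

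I do not expect a genuine obstacle here beyond bookkeeping; the single point to state with care is exactly this running-time accounting, namely that the cost of enumerating, for each $S$, all of its sub-subsets $S'$ contributes $\sum_{S}2^{|S|}=3^n$ rather than $2^n$, and that the $O(m\,2^n)$ connectivity precomputation is dominated by $O(3^nkm)$. The conceptual contrast with Theorem~\ref{t-exact1} worth flagging is that, because {\sc Disjoint Connected Subgraphs} permits arbitrary connected induced subgraphs (not paths with prescribed endpoints), connectivity of $G[S']$ is simply read off, so the table needs no extra vertex coordinate; the factor traded for this is precisely the jump from $2^n$ to $3^n$.
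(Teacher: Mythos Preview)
Your proof is correct and follows essentially the same approach as the paper: the same table $D[S,i]$, the same recurrence over subsets $S'\subseteq S$ with $Z_i\subseteq S'$ and $G[S']$ connected, and the same $\sum_{S}2^{|S|}=3^n$ accounting. The only cosmetic differences are that you start the base case at $i=0$ rather than $i=1$ and that you precompute connectivity for all subsets up front (giving a slightly sharper $O(k\,3^n+m\,2^n)$ before rounding up), whereas the paper checks connectivity on the fly at cost $O(m)$ per candidate $S'$.
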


\begin{proof}
We propose a similar, but slightly more crude algorithm as the one before. Given a set $S \subseteq V(G)$ and an integer $i \in\{1,\ldots,k\}$, let $D[S,i]$ be true if and only if $S$ can be partitioned into vertex-disjoint set $S_1,\ldots,S_i$ such that $S_j$ is connected and $Z_j \subseteq S_j$ for each
$j \in \{1,\ldots,i\}$. We set $D[S,1]$ to true if and only if $Z_1 \subseteq S$ and $S$ is connected. Beyond the base case, we set $D[S,i]$ to true if and only if there is a set $S' \subset S$ for which $Z_i \subseteq S'$, $S'$ is connected, and $D[S\setminus S',i-1]$ is true. Finally, the given instance of {\sc Disjoint Connected Subgraphs} is a yes-instance if and only if there is a set $S \subseteq V(G)$ for which $D[S,k]$ is true. The proof of correctness is similar (but simpler) to the proof of Theorem~\ref{t-exact1}.

It is immediate that the running time is $O(3^nkm)$. Each table entry $D[S,i]$ requires $O(2^{|S|}m)$ time to fill. Hence, the running time to fill all table entries where $S$ has size $\ell$ is $k\binom{n}{\ell}2^\ell m$.
This means that the total running time is $\sum_{\ell=0}^n\binom{n}{\ell}2^\ell mk=O(3^nkm)$, where the latter equality follows from the Binomial Theorem.
\qed\end{proof}

\section{Conclusions}\label{s-con}
We first gave a dichotomy for {\sc Disjoint $k$-Connected Subgraphs} in Theorem~\ref{t-main1}: for every $k$, the problem is 
polynomial-time 
solvable on $H$-free graphs if $H\ssi sP_1+P_4$ for some $s\geq 0$ and otherwise it is \NP-complete even for $k=2$.
Two vertices~$u$ and~$v$ are a {\it $P_4$-suitable pair} if $(G-\{u,v\},N(u),N(v))$ is a yes-instance of {\sc $2$-Disjoint Connected Subgraphs}. Recall that a graph $G$ can be contracted to $P_4$ if and only if $G$ has a $P_4$-suitable pair. Deciding if a pair~$\{u,v\}$ is a suitable pair is polynomial-time solvable for $H$-free graphs if $H$ is an induced subgraph of  $P_2+P_4$, $P_1+P_2+P_3$, $P_1+P_5$ or $sP_1+P_4$ for some $s\geq 0$; otherwise it is \NP-complete~\cite{KP20}.
Hence, we conclude from our new result that the presence of the two vertices $u$ and $v$ that are connected to the sets $Z_1=N(u)$ and $Z_2=N(v)$, respectively, yield exactly three additional polynomial-time solvable cases.

We also classified, in Theorem~\ref{t-main2}, the complexity of {\sc Disjoint Paths} and {\sc Disjoint Connected Subgraphs} for $H$-free graphs. Due to Propositions~\ref{o-1} and~\ref{o-2}, there are three non-equivalent open cases left and we ask the following:

\medskip
\noindent
{\bf Open Problem 1.} {\it Determine the computational complexity of {\sc Disjoint Paths} on $H$-free graph for $H\in \{3P_1,2P_1+P_2\}$ and the computational complexity of {\sc Disjoint Connected Subgraphs} on $H$-free graphs for $H=2P_1+P_2$.}

\medskip
\noindent
The three open cases seem challenging. We were able to prove the following positive result for a subclass of $3P_1$-free graphs, namely cobipartite graphs, or equivalently, $(3P_1,C_5,\overline{C_7}, \overline{C_9},\ldots)$-free graphs.

\begin{theorem}\label{t-cob}
		{\sc Disjoint Paths} is polynomial-time solvable for cobipartite graphs.
	\end{theorem}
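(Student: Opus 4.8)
The plan is to use that a cobipartite graph $G$ has, by definition, a vertex partition into two cliques $C_1,C_2$ (the two colour classes of the bipartite graph $\overline G$), computable in linear time, and to reduce the problem to a matching/flow computation. First I would observe that in a solution $(P^1,\ldots,P^k)$ we may assume every $P^i$ is chordless, since repeatedly shortcutting a chord only deletes vertices from a path and so preserves disjointness. A chordless path in a cobipartite graph meets each clique in at most two vertices, any two of which are consecutive on the path, so $|V(P^i)|\le 4$; naming the cliques so that $s_i\in C_1$, either $s_i,t_i$ lie in the same clique and $P^i$ is the edge $s_it_i$, or $s_i\in C_1$, $t_i\in C_2$ and $P^i$ is one of: the edge $s_it_i$; a path $s_i\,w\,t_i$ with $w$ a non-terminal common neighbour of $s_i$ and $t_i$; or a path $s_i\,u\,v\,t_i$ with $u\in C_1$ and $v\in C_2$ both non-terminal and $uv\in E(G)$.

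Next I would fix the trivial pairs: whenever $s_it_i\in E(G)$ (in particular whenever $s_i,t_i$ lie in the same clique, since $C_1,C_2$ are cliques), take $P^i$ to be the edge $s_it_i$; this path uses only terminal vertices, which are pairwise distinct, so it conflicts with no other path and the pair can be deleted. What remains is the set $\mathcal I$ of \emph{crossing} pairs with $s_i\in C_1$, $t_i\in C_2$, $s_it_i\notin E(G)$, and by the shape analysis each corresponding $P^i$ must use as internal vertices either a single non-terminal common neighbour of $s_i$ and $t_i$, or an edge of the bipartite graph $B=G[N_1,N_2]$, where $N_j$ denotes the non-terminal vertices of $C_j$. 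The decisive point is that \emph{every} edge of $B$ can serve \emph{every} crossing pair, with no adjacency to the terminals required. Hence the instance is a yes-instance if and only if one can choose, for every $i\in\mathcal I$, such a one- or two-vertex internal set so that all chosen sets are pairwise disjoint.

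I would then decide this by a maximum-flow computation. Build a network with source $\sigma$, sink $\tau$, a node $p_i$ with an arc $\sigma p_i$ of capacity $1$ for each $i\in\mathcal I$, a capacity-$1$ ``in/out'' gadget for each non-terminal vertex, and an auxiliary node $\gamma_i$ for each $i\in\mathcal I$. From $p_i$ a unit may be routed: (a) into the gadget of a vertex $w\in N_2$ with $s_iw\in E(G)$, then to $\tau$; (b) into the gadget of a vertex $u\in N_1$, then (only along an arc present precisely when $ut_i\in E(G)$) to $\gamma_i$ and on to $\tau$; or (c) into the gadget of some $u\in N_1$, then along an arc $uv$ with $uv\in E(B)$ into the gadget of $v\in N_2$, then to $\tau$ — where types (a) and (c) share the arcs out of the $N_2$-gadgets. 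By integrality of maximum flow, a flow of value $|\mathcal I|$ corresponds exactly to a choice of pairwise vertex-disjoint internal sets for all crossing pairs, and conversely, so the instance is a yes-instance precisely when the maximum flow has value $|\mathcal I|$; since the network has $O(n)$ nodes and $O(n^2)$ arcs, this runs in polynomial time (and handles $k$ being part of the input, as $k\le n/2$).

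I expect the subtle point to be the correctness of this network. The genuine difficulty is that a single unit of flow has to ``pay for'' the two internal vertices of a type-(c) path, while the one-vertex paths carry per-pair adjacency constraints that must still be enforced, so that no flow path can cheat by using a vertex that is not adjacent to the relevant terminal; routing the type-(b) paths through the dedicated nodes $\gamma_i$ (rather than directly from an $N_1$-gadget to $\tau$, which type (c) has to be free to use) is what reconciles these requirements, and checking that the network admits no unintended path from $\sigma$ to $\tau$ is the crux of the argument.
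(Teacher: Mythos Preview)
Your structural analysis (chordless paths have at most four vertices; dispose of adjacent pairs first; each remaining pair needs either a single non-terminal common neighbour or a cross edge in $B=G[N_1,N_2]$) is correct and is exactly where the paper starts as well. The gap is in the flow network: the ``dedicated'' nodes $\gamma_i$ are not dedicated at all, because the arc $u\to\gamma_i$ is a plain arc of the network and can carry flow that originated at any $p_j$, not just at $p_i$. You flag this yourself as ``the crux of the argument'', but it is not a technicality that can be argued away --- it makes the reduction unsound.

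Here is a concrete no-instance on which your network has full flow. Take $C_1=\{s_1,s_2,s_3,a,b\}$ and $C_2=\{t_1,t_2,t_3,c\}$, both cliques, with cross edges exactly $at_1$, $bt_2$, $s_1c$, $ac$, and $s_it_i\notin E$ for $i=1,2,3$ (all other terminal--terminal cross edges are irrelevant). Then pair~$3$ has a unique admissible path $s_3\,a\,c\,t_3$, after which pair~$1$ has no option left (all of $\{a\},\{c\},\{a,c\}$ meet $\{a,c\}$), so the instance is a no-instance. In your network, however,
\[
p_1\to c\to\tau,\qquad p_2\to b\to\gamma_2\to\tau,\qquad p_3\to a\to\gamma_1\to\tau
\]
is a legal integral flow of value $3$: the last path is precisely an ``unintended'' path, using $\gamma_1$ (which exists because $at_1\in E$) to absorb pair~$3$'s unit even though $at_3\notin E$. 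So max flow $=|\mathcal I|$ does not imply a solution.

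The paper sidesteps this by reducing to a \emph{matching} rather than a flow: form $G'$ from $G$ by deleting all intra-clique edges and all terminal--terminal edges, then merge each pair $(s_i,t_i)$ into a single vertex $x_i$ to obtain $G''$; the instance is a yes-instance iff $G''$ has a matching of size $k$. The point is that a matching edge $x_iv$ encodes a length-$2$ path with the adjacency to \emph{both} $s_i$ and $t_i$ built into the single vertex $x_i$, and any matching edge $uv$ with $u\in N_1$, $v\in N_2$ can be assigned to \emph{any} unmatched $x_i$. This avoids the need for your $\gamma$-nodes and the leakage they cause. If you want to salvage the flow viewpoint, you would need to prevent $p_j$-flow from ever reaching $\gamma_i$ for $j\ne i$; a clean way to do that, in effect, is to re-derive the paper's matching reduction.
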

	
	\begin{proof}
Let $G=(A \cup B, E)$, with cliques $A$ and $B$, be the given cobipartite graph.
If $s_i$ and $t_i$ are adjacent in $G$, then use the direct edge between them as the path $P^i$. We can then reduce the instance by removing $s_i$ and $t_i$. We now assume the instance has thus been reduced and (by abuse of notation) all terminal pairs are nonadjacent in $G$.

We now construct a bipartite graph $G^{\prime}$ by removing each edge within the cliques $A$ and $B$ as well as any edge $s_it_j$ both of whose endpoints are terminals. We then obtain a new graph $G^{\prime \prime}$ by deleting each terminal vertex and adding for each terminal pair $(s_i, t_i)$, a new vertex $x_i$ whose neighbourhood is the union of the neighbourhoods of $s_i$ and $t_i$ in $G^{\prime}$. We claim that $G$ contains the required $k$ disjoint paths $P^1 \dots P^k$ if and only if $G^{\prime \prime}$ contains a matching of size at least $k$. We can check the latter in polynomial time by using the Hopcroft-Karp algorithm for bipartite graphs~\cite{HK73}.
		
		We first assume that $G$ contains the disjoint paths $P^1 \dots P^k$. Note that, since $G$ is $3P_1$-free, we may assume each path has length at most $3$. A matching $M$ of size $k$ is obtained as follows. For each $i=1 \dots k$, if $P^i$ has length $2$ we add the edge $x_iv_i$ to $M$ where $v_i$ is the interior vertex of $P^i$. If $P^i$ has length $3$ then we add its interior edge $u_iv_i$ to $M$.
		
		 Next assume $G^{\prime \prime}$ contains a matching $M$ of size $k$. For each edge of $M$ which includes a vertex $x_i$ corresponding to a terminal pair $(s_i, t_i)$ we set $P^i$ to be $s_iv_it_i$ where $v_i$ is the vertex matched to $x_i$. Note that any edge $uv$ in $G$ which contains no terminal vertex and has one endpoint in each of $A$ and $B$ lies on a path of length $3$ between any two terminal vertices. Therefore, for each $i$ such that the vertex $x_i$ is not matched in $M$, we can choose a distinct edge $u_iv_i$ in $M$ to obtain the path $s_iu_iv_it_i$ in $G$. \qed
	\end{proof}
	
Finally, in Section~\ref{s-exact} we obtained exact algorithms for {\sc Disjoint Paths} and {\sc Disjoint Connected Subgraphs} running in time $O(2^nn^2m)$ and $O(3^nkm)$, respectively.
Faster exact algorithms are known for {\sc $k$-Disjoint Connected Subgraphs} for $k=2$ and $k=3$~\cite{CPPW14,TV13,AFLST20}, but we are unaware if there exist faster algorithms for general graphs.

\medskip
\noindent
{\bf Open Problem 2.} {\it Is there an exact algorithm for {\sc Disjoint Paths} or {\sc Disjoint Connected Subgraphs} on general graphs where the exponential factor is $(2-\epsilon)^n$ or $(3-\epsilon)^n$, respectively, for some $\epsilon > 0$?}

\bibliographystyle{abbrv}
%\bibliography{pathcontract}

\end{document}